\pdfoutput=1                         
\documentclass[11pt]{article}

\usepackage[margin=1in]{geometry}     
\usepackage[utf8]{inputenc}
\usepackage[T1]{fontenc}

\usepackage{amsmath}
\usepackage{amssymb}
\usepackage{amsfonts}
\usepackage{amsthm}
\usepackage{mathtools}
\usepackage{bm}

\usepackage{graphicx}
\usepackage{xcolor}
\usepackage{multirow}
\usepackage{booktabs}
\usepackage{float}
\usepackage{subcaption}

\usepackage[ruled,vlined,linesnumbered]{algorithm2e}
\usepackage{pgf}
\usepackage{pgfplots}
\pgfplotsset{compat=1.18}
\usepackage{tikz}

\usepackage{listings}
\usepackage{upquote}                 
\newfloat{listing}{tbp}{lol}          
\floatname{listing}{Figure}
\definecolor{codekw}{HTML}{008000}
\definecolor{codestr}{HTML}{BA2121}
\definecolor{codecom}{HTML}{408080}
\definecolor{codeframe}{HTML}{BFBFBF}
\usepackage{titlesec}
\titlelabel{\thetitle. }

\usepackage{hyperref}
\hypersetup{
    colorlinks,
    linkcolor={red!50!black},
    citecolor={blue!50!black},
    urlcolor={blue!80!black}
}

\usepackage{natbib}
\bibpunct[, ]{(}{)}{,}{a}{}{,}%
\DeclareMathOperator*{\argmin}{arg\,min}

\usetikzlibrary{decorations.pathreplacing}
\tikzset{
  show curve controls/.style={
    decoration={
      show path construction,
      curveto code={
        \fill[red] (\tikzinputsegmentsupporta) circle(1.5pt);
        \fill[red] (\tikzinputsegmentsupportb) circle(1.5pt);
        \draw[black]
        (\tikzinputsegmentfirst)
        .. controls (\tikzinputsegmentsupporta)
                and (\tikzinputsegmentsupportb) ..
        (\tikzinputsegmentlast);
      }
    }, decorate
  }
}
\usetikzlibrary{positioning,shapes.multipart,arrows.meta,fit,backgrounds,calc}

\graphicspath{{figures/}{./}}

\theoremstyle{plain}
\newtheorem{theorem}{Theorem}
\newtheorem{proposition}{Proposition}
\newtheorem{corollary}{Corollary}
\newtheorem{lemma}{Lemma}
\theoremstyle{definition}
\newtheorem{definition}{Definition}
\newtheorem{example}{Example}
\theoremstyle{remark}

\renewenvironment{proof}[1]{\par\noindent{\itshape #1}~}{\par\medskip}
\newcommand{\Halmos}{\hfill\ensuremath{\square}}
\newcommand{\Equationvalidatefalse}{}

\newenvironment{APPENDIX}[1]{\par\bigskip\appendix}{\par}

\begin{document}

\title{Computational Framework for B\'ezier Distributions}

\author{%
  Esteban Leiva\thanks{Department of Industrial and Systems Engineering,
    University of Southern California, Los Angeles, CA, USA,
    \texttt{leivamon@usc.edu}}
  \and
  Andr\'es L. Medaglia\thanks{Centro para la Optimizaci\'on y Probabilidad
    Aplicada (COPA), Departamento de Ingenier\'ia Industrial, Universidad de los
    Andes, Bogot\'a, Colombia, \texttt{amedagli@uniandes.edu.co}}
  \and
  Luis F. Zuluaga\thanks{Department of Industrial and Systems Engineering,
    Lehigh University, Bethlehem, PA, USA, \texttt{luis.zuluaga@lehigh.edu}}%
}

\date{\today}
\maketitle
\begin{abstract}
    Flexible continuous univariate distributions with bounded support are essential for accurate input modeling in stochastic simulation and decision analysis. Although B\'ezier distributions provide a powerful family capable of representing complex shapes, their adoption has been hindered by the lack of efficient fitting procedures and modern software implementations. This paper develops a computational framework for fitting B\'ezier distributions to empirical data via both minimum error and maximum likelihood estimation, leveraging first-order optimization methods and exploiting the geometry of the parameter space. We identify provably (asymptotically) lossless convex restrictions of the feasible set that enable efficient projection operators based on isotonic regression and develop first-order algorithms that reduce computational runtime by three to four orders of magnitude compared to traditional derivative-free methods, while delivering consistent fits across real-world data. When benchmarked against the nonlinear solver IPOPT, our methods prove three orders of magnitude faster on average and more robust, while achieving comparable accuracy. To bridge the gap between theory and practice, we introduce \texttt{bezierv}, an open-source Python package providing a unified interface for fitting, analyzing, and convolving B\'ezier distributions.
\end{abstract}

\noindent\textbf{Keywords:} Continuous random variables, Distribution fitting, B\'ezier probability distribution, Simulation, Input analysis

\section{Introduction}

Flexible continuous univariate distributions with bounded support are pivotal across decision sciences, economics, quality control, Bayesian inference, and biostatistics \citep{Gupta2004}. They are particularly relevant for input modeling in stochastic simulation, where standard parametric families often suffer from three limitations: i) difficulty to fit a real-world input process (e.g., multimodality, troublesome tails); ii) difficulty to estimate the distribution parameters from data; iii) limited expressive power to match the input process's shape \citep{Kuhl2010, Biller2010}. Among the most widely used flexible distributions in the simulation literature are the generalized beta distribution \citep{AbouRizk1994, Johnson1995, Gupta2004} and the Johnson translation system \citep{Johnson1949, DeBrota1989, Chen2001}. Motivated by these shortcomings, \citet{Wagner1993} parameterized the cumulative distribution function (cdf) as a B\'ezier curve, introducing the \emph{B\'ezier distribution}: a flexible family that, though less widely adopted, overcomes the three shortcomings \citep{Kuhl2010}.

B\'ezier distributions are built upon B\'ezier curves: smooth parametric curves determined by control points. These curves derive from the Bernstein polynomials \citep{Bernstein1912} and were developed independently by P.\ de Casteljau and P.\ Bézier at Citroën and Renault \citep{Farin1997}, and are now ubiquitous in computer-aided geometric design for their ease of construction and expressiveness. \citet{Wagner1993} parameterized the random variable's cdf through the curve's exact interpolation and closed-form derivative. They also developed \texttt{PRIME}, a Windows 3.0 application for data-driven fitting and sampling of B\'ezier distributions. These distributions were later extended to bivariate populations \citep{Wagner1995, Wagner1996WSC}, benchmarked against classical flexible families \citep{Kuhl2007, Kuhl2010}, and applied in soft computing \citep{Medaglia2002}, yet never became mainstream: no new fitting methods or modern software have appeared since \texttt{PRIME}, and this absence of computational tools has hindered their adoption \citep{Law_2015}.

In this paper we revisit B\'ezier distributions through the lens of mathematical optimization. We introduce a convex restriction of the estimation problem that constrains the $n+1$ control points, defining the $n$-th degree B\'ezier curve, to the monotone set of \citet{Wagner1993} (see \eqref{eq:C_n}), and prove it asymptotically lossless: although monotone control points form a strict subset of the set of B\'ezier distributions at any fixed degree, every feasible B\'ezier distribution is approximated uniformly by one with monotone control points, with error decaying at rate $\mathcal{O}(1/n)$ in the degree. We leverage this to show that high-degree B\'ezier distributions with monotone control points approximate the empirical fit of general ones arbitrarily well. Within this framework we develop first-order methods for minimum error and maximum likelihood estimation that reduce runtime by three to four orders of magnitude relative to the original derivative-free methods  of \cite{Wagner1993}, while improving fit quality. The restricted formulations are also amenable to off-the-shelf nonlinear solvers such as IPOPT \citep{Wachter2006}, which we use as a benchmark to show that our methods are substantially faster and more robust, while IPOPT attains better accuracy at the cost of significantly longer runtimes and solver failures that become frequent on real-world data. Beyond estimation, we motivate a Monte Carlo scheme for aggregating independent B\'ezier variables, a capability that proves essential when embedding them in transportation problems, such as the stochastic shortest path. Finally, we develop \texttt{bezierv} (\url{https://github.com/EstebanLeiva/bezierv}), an open-source Python package with a unified interface for fitting, analyzing, and convolving B\'ezier distributions, and demonstrate it on multimodal input modeling, synthetic data from a wide range of families, real transportation data, and the data-driven reliable shortest path problem.

The paper is structured as follows. Section~\ref{sec:cdf} presents the B\'ezier distribution. Section~\ref{sec:fitting} develops our minimum error estimation methods by imposing monotonicity on the control points to restrict the problem to a tractable optimization problem and proving this restriction asymptotically lossless. Section~\ref{sec:mle} extends the methodology to maximum likelihood estimation. Section~\ref{sec:conv} discusses the convolution of B\'ezier random variables and motivates the Monte Carlo aggregation used in Section~\ref{sec:spp}. Section~\ref{sec:package} introduces the \texttt{bezierv} Python package. Section~\ref{sec:experiments} reports computational experiments on real and synthetic datasets that compare our algorithms with those of \citet{Wagner1993} and IPOPT, compare the B\'ezier fit against other widely used flexible distributions, and confirm the approximation guarantee for the monotonic restriction. Section~\ref{sec:spp} applies the package to data-driven reliable shortest path problems. We finish in Section~\ref{sec:conclusions} with concluding remarks.

\section{B\'ezier distribution} \label{sec:cdf}
    A B\'ezier curve of degree $n$ is defined parametrically by
    \begin{equation}
        \mathbf{P}(t) = [
                        P_x(t;n, \mathbf{x}),
                        P_z(t;n, \mathbf{z})
                        ]^\top
                        = \sum\limits_{i=0}^n B_{n,i}(t) \mathbf{p}_i, \quad t\in [0,1], \label{eq:beziercurve}
    \end{equation}
    where $\mathcal{P} = \{\mathbf{p}_0, ..., \mathbf{p}_n\}$ is the set of $n+1$ control points, $\mathbf{p}_i = (x_i, z_i)^\top$ for $i=0,1,...,n$, $\mathbf{x} = (x_0,x_1,...,x_n)^\top$, $\mathbf{z} = (z_0,z_1,...,z_n) ^\top$, and the blending function $B_{n,i}(t)$ is the basis Bernstein polynomial
    \[
    B_{n,i}(t)=
    \begin{cases}
    \dfrac{n!}{i!\,(n-i)!}\,t^{\,i}\bigl(1-t\bigr)^{n-i}, & \text{for all } t\in[0,1]\ \text{and}\ i=0,1,\ldots,n,\\[6pt]
    0, & \text{otherwise}.
    \end{cases}
    \]
    B\'ezier curves are continuous, as they are a weighted sum of polynomials. For $t=0$,  we have that $B_{n,0}(0) =1$ and $B_{n,i}(0) =0$ for $i=1,\ldots,n$; whereas for $t=1$, $B_{n,i}(1) =0$ for $i=0,\ldots,n-1$ and $B_{n,n}(1) =1$. Hence, B\'ezier curves satisfy the endpoint-interpolation property, namely, $\mathbf{P}(0) = \mathbf{p}_0$ and $\mathbf{P}(1) = \mathbf{p}_n$.  Other notable properties of B\'ezier curves include: (i) affine invariance, (ii) invariance under affine parameter transformations, (iii) convex hull property, (iv) symmetry, (v) invariance under barycentric combinations, (vi) linear precision, and (vii) pseudo-local control. For a detailed explanation of the properties above and an in-depth study of B\'ezier curves the reader is referred to \citet{Farin1997}.

    \subsection{B\'ezier distribution and density functions}
    Let $X$ be a univariate continuous random variable supported on the bounded interval $[a,b]$. We say $X$ is a B\'ezier random variable if its cumulative distribution function (cdf) is given parametrically by a B\'ezier curve \eqref{eq:cdf}
    \begin{equation}
    x(t) = \sum_{i=0}^{n} B_{n,i}(t)\,x_i, \qquad F_{X}(x(t)) = \sum_{i=0}^{n} B_{n,i}(t)\,z_i, \qquad t \in [0,1]. \label{eq:cdf}
    \end{equation}

    Recall that a cdf must satisfy the following properties: (i) it is non-decreasing, (ii) it is right-continuous, and (iii) it satisfies the boundary conditions \( F_X(a) = 0 \) and \( F_X(b) = 1 \). Since B\'ezier curves are continuous, it suffices to enforce the monotonicity and endpoint conditions to ensure the validity of the B\'ezier cdf. The latter can be satisfied by setting the control points \( \mathbf{p}_0 = [a, 0]^\top \) and \( \mathbf{p}_n = [b, 1]^\top \) due to the endpoint-interpolation property.

    To enforce monotonicity we have to ensure that the derivative of the B\'ezier curve representing the cdf \eqref{eq:cdf} be nonnegative throughout the domain. This derivative corresponds to the random variable's probability density function (pdf) and is defined parametrically by
    \begin{equation}
    x(t) = \sum_{i=0}^{n} B_{n,i}(t)\,x_i, \qquad f_{X}(x(t)) = \frac{P_z'(t;n,\mathbf{z})}{P_x'(t;n,\mathbf{x})} = \frac{\sum_{i=0}^{n-1} B_{n-1,i}(t) \Delta z_i}{\sum_{i=0}^{n-1} B_{n-1,i}(t)  \Delta x_i}, \quad t \in [0,1],\label{eq:pdf}
    \end{equation}
    where 
    \begin{equation*}
    \begin{aligned}
     \Delta z_i = z_{i+1}-z_{i} \quad \text{ and } \quad \Delta x_i = x_{i+1}-x_{i},
    \end{aligned}
    \end{equation*}
    are the first differences of the $x$- and $z$-coordinates of the control points $\mathbf{p}_{i+1}$ and $\mathbf{p}_{i}$ for $i=0,\ldots,n-1$.
    Note that while $ f_X(x(t))$ depends both on $\mathbf{x}$ and $\mathbf{z}$, $x(t)$ and $F_X(x(t))$ depend solely on $\mathbf{x}$ and $\mathbf{z}$, respectively. To highlight this dependency we will write $x(t) = P_x(t;n,\mathbf{x})$, $F_X(x(t)) = P_z(t;n,\mathbf{z})$, and $f_X(x(t)) = f_X(t;n,\mathbf{x},\mathbf{z})$. For illustrative examples of B\'ezier distributions, see Figure \ref{fig:multimodal_fit} and Appendix \ref{appendix:examples_bezier}.
    
\section{Minimum error estimation} \label{sec:fitting}
Let us consider a sample $\{ X_j\}_{j=1}^m$ of a continuous random variable $X$ with unknown cdf $F_X(\cdot)$. Assume that the sample contains at least two distinct values. The sorted sample (ascending order) is denoted by  $\{ X_{(j)}\}_{j=1}^m$, where $X_{(1)}$ and $X_{(m)}$ are the smallest and largest values from the sample, respectively. The empirical cdf is given by $F_m(x) = \frac{1}{m} \sum_{j=1}^m \mathbf{1}(X_{(j)} \leq x)$, where $\mathbf{1}(X_{(j)} \leq x) = 1$  if $X_{(j)} \leq x$ and $0$, otherwise. 

Given a fixed number of control points $n$ with  $n \ll m$, we aim to fit a B\'ezier distribution as defined in \eqref{eq:cdf} to the empirical distribution function $F_m(\cdot)$. The objective of minimum error estimation is to determine the location of the control points $\mathbf{p}_i = \left(x_i,z_i\right)^\top$ for $i=0,\ldots,n$ that minimizes the mean squared error (MSE) between the fitted and empirical cdfs at the sample points. This problem is formulated as the semi-infinite nonlinear optimization problem \eqref{eq:nonlinear}

 {%
  \setlength{\abovedisplayskip}{0pt}%
  \setlength{\belowdisplayskip}{0pt}%
  \setlength{\abovedisplayshortskip}{0pt}%
  \setlength{\belowdisplayshortskip}{0pt}%
  \setlength{\jot}{1pt}
  \begin{subequations} \label{eq:nonlinear}
  \begin{align}
      \min\limits_{\mathbf{x}, \mathbf{z}, \mathbf{t}} \quad & \frac{1}{m}\sum_{j=1}^{m} \left( F_m(X_{(j)})  - P_z(t_{(j)}; n, \mathbf{z})\right)^2
  \label{obj:sumofsquares} \\
      \text{s.t.} \quad & P_x( t_{(j)}; n, \mathbf{x}) = X_{(j)}, \quad 0\leq t_{(j)} \leq 1, & & j=1,\ldots,m \label{constraint:sample_ts}\\
      & f_X(t;n,\mathbf{x}, \mathbf{z}) \geq 0, & & \forall t \in [0, 1] \label{constraint:positivepdf} \\
      & z_0 = 0, \   z_n = 1 , \quad  x_0 = X_{(1)}, \ x_n = X_{(m)}. & & \label{constraint:eqx_0m}
  \end{align}
  \end{subequations}
  }%

Note that by the parametric nature of the B\'ezier cdf we use \( t_{(j)} \) instead of \( t \) to emphasize that the parameter is computed at each sample point by finding \( t_{(j)} \in [0,1] \) such that \( x(t_{(j)}) = P_x( t_{(j)}; n, \mathbf{x}) = X_{(j)} \) for \( j = 1, \dots, m \) in~\eqref{constraint:sample_ts}. Constraint \eqref{constraint:positivepdf} ensures that the pdf is non-negative and constraint~\eqref{constraint:eqx_0m} ensures the boundary conditions. Since the pdf is the ratio \( f_X(t;n,\mathbf{x},\mathbf{z}) = P_z'(t;n,\mathbf{z})/P_x'(t;n,\mathbf{x}) \), constraint \eqref{constraint:positivepdf} technically amounts to \( P_x'(t;n,\mathbf{x}) > 0 \) for \( t \in (0,1) \) together with \( P_z'(t;n,\mathbf{z}) \geq 0 \) for \( t \in [0,1] \).

\subsection{The restricted problem} \label{sec:subset}
Let $\mathcal{S}_n$ denote the set of $n+1$ control points whose $n$-th degree B\'ezier curve is a valid cdf on the support $[X_{(1)}, X_{(m)}]$
{\Equationvalidatefalse
\begin{equation*}
\begin{aligned}
\mathcal{S}_n = \Big\{(\mathbf{x}, \mathbf{z}) \in \mathbb{R}^{(n+1)}\times \mathbb{R}^{(n+1)} \mid \ &x_0 = X_{(1)}, \: x_n = X_{(m)}, \: z_0 = 0, \: z_n=1, \\
&P_x'(t;n,\mathbf{x}) > 0 \ \ \forall t \in (0,1), \; P_z'(t;n,\mathbf{z})\geq 0 \ \ \forall t \in [0,1] \Big\}.
\end{aligned}
\end{equation*}}
The two semi-infinite constraints in $\mathcal{S}_n$ pose a significant computational challenge. We therefore replace them with the \emph{monotonicity} condition of \citet{Wagner1993}, in which the control points satisfy the boundary conditions $x_0 = X_{(1)}$, $x_n = X_{(m)}$, $z_0 = 0$, $z_n = 1$, and are ordered such that $x_i \leq x_{i+1}, z_i \leq z_{i+1}$ for $i=0,...,n-1$. Note that this ordering implies that $\Delta x_i\geq 0$ and $\Delta z_i\geq 0$ for $i=0,...,n-1$.

The monotonicity condition implies membership in $\mathcal{S}_n$, reduces to finitely many linear constraints on the control points, and the B\'ezier curves it admits approximate those generated by $\mathcal{S}_n$ arbitrarily well as the degree increases (see Theorem \ref{theorem}). To establish these properties, we define the set of $n+1$ ordered control points as
{\Equationvalidatefalse
\begin{equation} \label{eq:C_n}
\begin{aligned}
    \mathcal{C}_n = \Big\{(\mathbf{x},\mathbf{z})\in \mathbb{R}^{(n+1)}\times \mathbb{R}^{(n+1)} \mid \ &x_0 = X_{(1)},\ x_n = X_{(m)},\ z_0 = 0,\ z_n=1, \\
    &x_i \leq x_{i+1},\ z_i \leq z_{i+1}, \ i=0,\ldots,n-1 \Big\}.
\end{aligned}
\end{equation}}
\begin{lemma}\label{lem:CsubsetS}
$\mathcal{C}_n \subseteq \mathcal{S}_n$.
\end{lemma}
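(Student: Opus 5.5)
The plan is to take an arbitrary $(\mathbf{x},\mathbf{z})\in\mathcal{C}_n$ and verify each defining condition of $\mathcal{S}_n$ in turn. The boundary conditions $x_0 = X_{(1)}$, $x_n = X_{(m)}$, $z_0=0$, $z_n=1$ appear verbatim in both sets, so nothing needs to be done for them. What remains are the two semi-infinite conditions, and for both I will rely on the derivative formula already stated in \eqref{eq:pdf}:
\[
P_x'(t;n,\mathbf{x}) = n\sum_{i=0}^{n-1} B_{n-1,i}(t)\,\Delta x_i,\qquad P_z'(t;n,\mathbf{z}) = n\sum_{i=0}^{n-1} B_{n-1,i}(t)\,\Delta z_i .
\]
The factor $n$ cancels in the ratio in \eqref{eq:pdf}, but it is harmless here because only signs matter.

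For the $z$-condition, I will use that each Bernstein basis polynomial satisfies $B_{n-1,i}(t)\ge 0$ on $[0,1]$, and that membership in $\mathcal{C}_n$ gives $\Delta z_i\ge 0$. Then $P_z'(t)$ is a sum of nonnegative terms, so $P_z'(t;n,\mathbf{z})\ge 0$ for all $t\in[0,1]$.

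The $x$-condition is where the real work lies, because $\mathcal{S}_n$ demands the strict inequality $P_x'(t)>0$ on $(0,1)$, whereas $\mathcal{C}_n$ only supplies $\Delta x_i\ge 0$. The plan is to argue in four steps.
\begin{enumerate}
\item The sample contains at least two distinct values, so $X_{(m)}>X_{(1)}$.
\item Hence $\sum_{i=0}^{n-1}\Delta x_i = x_n-x_0 = X_{(m)}-X_{(1)}>0$.
\item Since every $\Delta x_i\ge 0$ and their sum is positive, some index $k$ has $\Delta x_k>0$.
\item For $t\in(0,1)$ we have $B_{n-1,k}(t)=\binom{n-1}{k}t^k(1-t)^{n-1-k}>0$, and all other terms are nonnegative. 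Therefore
\[
P_x'(t)\ \ge\ n\,B_{n-1,k}(t)\,\Delta x_k\ >\ 0 .
\]
\end{enumerate}

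This establishes both semi-infinite conditions, so $(\mathbf{x},\mathbf{z})\in\mathcal{S}_n$ and hence $\mathcal{C}_n\subseteq\mathcal{S}_n$. I expect the only delicate point to be the strictness in the $x$-condition, which hinges on the standing assumption that the sample has two distinct values together with the strict positivity of the Bernstein basis on the open interval. Everything else is immediate.
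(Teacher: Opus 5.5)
Your proposal is correct and follows essentially the same argument as the paper: the boundary conditions are shared, $P_z'$ is a nonnegative combination of Bernstein polynomials, and $P_x'>0$ on $(0,1)$ because some $\Delta x_k>0$ and $B_{n-1,k}(t)>0$ there. You are slightly more explicit than the paper in tracing the existence of $\Delta x_k>0$ back to the standing assumption that the sample has two distinct values, which gives $X_{(m)}>X_{(1)}$.
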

\begin{proof}{Proof.}
    See Appendix \ref{appendix:proofs}.
\end{proof}

Enforcing monotonicity of the control points is a sufficient condition for $(\mathbf{x}, \mathbf{z}) \in \mathcal{S}_n$ and unlike $\mathcal{S}_n$, $\mathcal{C}_n$ is defined by finitely many linear constraints. Example \ref{counterexample} shows that the monotonicity of the control points is not a necessary condition for membership in $\mathcal{S}_n$.
\begin{example} \label{counterexample}
    Let $n=3$ and define the control points $\mathcal{P}_3=\{(0,0), (0.6,1),(0.4,0), (1,1)\}$. The first differences are $\Delta x_0= 0.6$, $\Delta x_1=-0.2$, $\Delta x_2= 0.6$, $\Delta z_0= 1$, $\Delta z_1=-1$, and $\Delta z_2=1$. Taking derivatives, $P_z'(t;3,\mathbf{z})=3(2t-1)^2\geq 0$ on $[0,1]$ and $P_x'(t;3,\mathbf{x})=3(1.6t^2-1.6t+0.6)>0$ on $(0,1)$, so $\mathcal{P}_3\in\mathcal{S}_3\setminus\mathcal{C}_3$. Moreover, since the Bernstein polynomials $\{B_{3,i}\}^3_{i=0}$ form a basis for the space of polynomials of degree at most $n=3$ on $[0,1]$, the B\'ezier curve defined by $\mathcal{P}_3$ is unique: no other set of four control points defines the same third-degree curve.
\end{example}

Although $\mathcal{C}_{\tilde{n}} \subsetneq \mathcal{S}_{\tilde{n}}$ is strict for any fixed degree $\tilde{n}$, Theorem~\ref{theorem} shows that the gap closes as the approximating degree $n$ grows: every B\'ezier curve from $\mathcal{S}_{\tilde{n}}$ is approximated uniformly, at rate $\mathcal{O}(1/n)$, by one with control points in $\mathcal{C}_n$. Together with Lemma~\ref{lem:CsubsetS}, this makes $\mathcal{C}_n$ a finite convex inner approximation of $\mathcal{S}_n$ that is \emph{asymptotically tight}.

\begin{theorem} \label{theorem}
  Let $(\tilde{\mathbf{x}}, \tilde{\mathbf{z}}) \in \mathcal{S}_{\tilde{n}}$ define a B\'ezier cdf of degree $\tilde{n}$ denoted by $\tilde{\mathbf{P}}(t) = \bigl[P_x(t;\, \tilde{n},\, \tilde{\mathbf{x}}),\, P_z(t;\, \tilde{n},\, \tilde{\mathbf{z}})\bigr]^\top$ and let $\kappa = \max\!\bigl( \| P_x''(\cdot;\, \tilde{n},\, \tilde{\mathbf{x}}) \|_\infty,\; \| P_z''(\cdot;\, \tilde{n},\, \tilde{\mathbf{z}}) \|_\infty \bigr)$. For any $n \geq 1$, let
  $(\mathbf{x}, \mathbf{z}) \in \mathbb{R}^{(n+1)} \times \mathbb{R}^{ (n+1)}$ be the control points obtained by evaluating $\tilde{P}$ at $i/n$,
  \begin{equation}
  x_i \;=\; P_x\!\left(\tfrac{i}{n};\,\tilde{n},\,\tilde{\mathbf{x}}\right),
  \qquad
  z_i \;=\; P_z\!\left(\tfrac{i}{n};\,\tilde{n},\,\tilde{\mathbf{z}}\right),
  \qquad i = 0, 1, \ldots, n,
  \label{eq:bernstein-cp}
  \end{equation}
  and let $\mathbf{P}(t) := \bigl[P_x(t;\, n,\, {\mathbf{x}}),\, P_z(t;\, n,\, {\mathbf{z}})\bigr]^\top$
  be the $n$-th degree B\'ezier curve they generate. Then $({\mathbf{x}}, {\mathbf{z}}) \in \mathcal{C}_n$ and
  \begin{equation}
  \sup_{t \in [0,1]} \bigl\| \mathbf{P}(t) - \tilde{\mathbf{P}}(t) \bigr\|_\infty
  \;\leq\;
  \frac{\kappa}{8n}.
  \label{eq:rate}
  \end{equation}
\end{theorem}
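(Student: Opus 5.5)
The approach is to read the construction \eqref{eq:bernstein-cp} as the degree-$n$ Bernstein approximation of each coordinate polynomial, $\mathbf{P}(t)=\sum_{i=0}^n B_{n,i}(t)\,\tilde{\mathbf{P}}(i/n)$. Two things then need to be shown. Membership in $\mathcal{C}_n$ will follow from monotonicity of $\tilde{\mathbf{P}}$ along the parameter. The rate \eqref{eq:rate} will follow from the classical second-order error bound for Bernstein operators, applied to each coordinate separately.

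\textbf{Membership.} The endpoint conditions follow from endpoint interpolation of $\tilde{\mathbf{P}}$. We have $x_0=P_x(0;\tilde n,\tilde{\mathbf{x}})=\tilde x_0=X_{(1)}$, and in the same way $x_n=X_{(m)}$, $z_0=0$, $z_n=1$. For ordering, $(\tilde{\mathbf{x}},\tilde{\mathbf{z}})\in\mathcal{S}_{\tilde n}$ gives $P_x'(\cdot;\tilde n,\tilde{\mathbf{x}})>0$ on $(0,1)$ and $P_z'(\cdot;\tilde n,\tilde{\mathbf{z}})\ge 0$ on $[0,1]$. Hence both coordinate functions are nondecreasing on $[0,1]$; for $P_x$ this uses continuity at the endpoints. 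Sampling them at the increasing nodes $0<1/n<\dots<1$ therefore gives $x_i\le x_{i+1}$ and $z_i\le z_{i+1}$, so $(\mathbf{x},\mathbf{z})\in\mathcal{C}_n$.

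\textbf{Error bound.} Fix one coordinate $g\in\{P_x(\cdot;\tilde n,\tilde{\mathbf{x}}),P_z(\cdot;\tilde n,\tilde{\mathbf{z}})\}$, which is $C^2$ on $[0,1]$ with $\|g''\|_\infty\le\kappa$, and write $B_n g(t)=\sum_i B_{n,i}(t)g(i/n)$. The key identities are the first two moments of the Bernstein weights:
\[
\sum_i B_{n,i}(t)=1,\qquad \sum_i B_{n,i}(t)\,\tfrac{i}{n}=t,\qquad \sum_i B_{n,i}(t)\bigl(\tfrac{i}{n}-t\bigr)^2=\tfrac{t(1-t)}{n}.
\]
Taylor's theorem with Lagrange remainder gives
\[
g(i/n)=g(t)+g'(t)(i/n-t)+\tfrac12 g''(\xi_i)(i/n-t)^2 .
\]
Multiplying by $B_{n,i}(t)\ge0$ and summing, the zeroth- and first-order terms reproduce $g(t)$, because the weights sum to one and have mean $t$. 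This leaves
\[
|B_ng(t)-g(t)|\le \tfrac{\kappa}{2}\cdot\tfrac{t(1-t)}{n}\le\tfrac{\kappa}{8n},
\]
since $t(1-t)\le 1/4$. Taking the maximum over the two coordinates and the supremum over $t$ yields \eqref{eq:rate}.

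The only computational step is the variance identity $\sum_i B_{n,i}(t)(i/n-t)^2=t(1-t)/n$. It is standard: it is the variance of a $\mathrm{Binomial}(n,t)$ variable divided by $n^2$. The main subtlety is conceptual rather than technical. The comparison is between the two curves at the same parameter $t$, not between the induced cdfs as functions of $x$, so no reparametrization issue arises and the componentwise bound suffices.
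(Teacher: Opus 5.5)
Your proof is correct and follows essentially the same route as the paper. Membership in $\mathcal{C}_n$ comes from endpoint interpolation plus monotonicity of the coordinate polynomials along the parameter, and the rate comes from a second-order Taylor expansion at $t$ combined with the Bernstein moment identities and $t(1-t)\le \tfrac14$. The only difference is cosmetic: you use the Lagrange form of the remainder where the paper uses the integral form, and both give the same bound $\tfrac{\kappa}{2}(\tfrac{i}{n}-t)^2$.
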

\begin{proof}{Proof.}
    See Appendix \ref{appendix:proofs}.
\end{proof}

Corollary~\ref{corollary} shows that the MSE gap between the fits in $\mathcal{C}_n$ and $\mathcal{S}_{\tilde n}$ also vanishes at rate $\mathcal{O}(1/n)$, so high-degree approximations in $\mathcal{C}_n$ can reproduce the empirical fit of elements of $\mathcal{S}_{\tilde n}$ arbitrarily well. Experimentally, the restriction barely affects fit quality and a few control points already yield very low MSE (see Sections~\ref{sec:fitquality} and~\ref{sec:experiments}).

\begin{corollary} \label{corollary}
    Let $(\tilde{\mathbf{x}}, \tilde{\mathbf{z}}) \in \mathcal{S}_{\tilde{n}}$ define a B\'ezier cdf with bounded pdf, i.e.
    $\tilde{\delta} := \min_{t\in[0,1]} P_x'(t;\,\tilde{n},\,\tilde{\mathbf{x}}) > 0$,
    and let $(\mathbf{x}, \mathbf{z}) \in \mathcal{C}_n$ be the control points~\eqref{eq:bernstein-cp}
    of Theorem~\ref{theorem}. Set
    \sloppy$\rho = \tfrac{\Vert P_z'(\cdot ; \tilde n, \tilde{\mathbf{z}})\Vert_\infty}{\tilde{\delta}}$
    and $\kappa = \max\bigl( \|P_x''(\cdot;\, \tilde{n},\, \tilde{\mathbf{x}})\|_\infty,\, \|P_z''(\cdot;\, \tilde{n},\, \tilde{\mathbf{z}})\|_\infty \bigr)$.
    Then
    \begin{equation}
        \Bigg| \frac{1}{m}\sum_{j=1}^m \bigl( F_m(X_{(j)}) - P_z(t_{(j)};\,n,\,\mathbf{z}) \bigr)^2
        - \frac{1}{m}\sum_{j=1}^m \bigl( F_m(X_{(j)}) - P_z(\tilde{t}_{(j)};\,\tilde{n},\,\tilde{\mathbf{z}}) \bigr)^2 \Bigg|
        \leq \frac{(1+ \rho)\kappa}{4n}, \label{eq:bound_cor}
    \end{equation}
    where $t_{(j)}$ and $\tilde{t}_{(j)}$ satisfy
    $P_x(t_{(j)};\,n,\,\mathbf{x})= X_{(j)} = P_x(\tilde{t}_{(j)};\,\tilde{n},\,\tilde{\mathbf{x}})$.
\end{corollary}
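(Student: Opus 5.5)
We bound the difference of two averages of squares term by term, using $a^2-b^2=(a-b)(a+b)$. Write $e_j = F_m(X_{(j)}) - P_z(t_{(j)};n,\mathbf{z})$ and $\tilde e_j = F_m(X_{(j)}) - P_z(\tilde t_{(j)};\tilde n,\tilde{\mathbf{z}})$. Then $|e_j^2-\tilde e_j^2| = |e_j-\tilde e_j|\,|e_j+\tilde e_j|$. All three quantities $F_m(X_{(j)})$, $P_z(t_{(j)};n,\mathbf{z})$ and $P_z(\tilde t_{(j)};\tilde n,\tilde{\mathbf{z}})$ lie in $[0,1]$: the first is an empirical cdf value, and the other two are values of valid cdfs, by Lemma~\ref{lem:CsubsetS} and Theorem~\ref{theorem} for $(\mathbf{x},\mathbf{z})\in\mathcal{C}_n$, and by assumption for the tilde curve. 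Hence $|e_j|,|\tilde e_j|\le 1$, so $|e_j+\tilde e_j|\le 2$. It therefore suffices to show
\[
\bigl|P_z(t_{(j)};n,\mathbf{z})-P_z(\tilde t_{(j)};\tilde n,\tilde{\mathbf{z}})\bigr|\le \frac{(1+\rho)\kappa}{8n}
\]
for every $j$. Averaging over $j$ then gives the bound $(1+\rho)\kappa/(4n)$.

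For this per-sample estimate, insert the intermediate value $P_z(t_{(j)};\tilde n,\tilde{\mathbf{z}})$ and use the triangle inequality:
\[
\bigl|P_z(t_{(j)};n,\mathbf{z})-P_z(t_{(j)};\tilde n,\tilde{\mathbf{z}})\bigr| + \bigl|P_z(t_{(j)};\tilde n,\tilde{\mathbf{z}})-P_z(\tilde t_{(j)};\tilde n,\tilde{\mathbf{z}})\bigr|.
\]
The first term is at most $\kappa/(8n)$ directly from \eqref{eq:rate} in Theorem~\ref{theorem}. By the mean value theorem, the second term is at most $\|P_z'(\cdot;\tilde n,\tilde{\mathbf{z}})\|_\infty\,|t_{(j)}-\tilde t_{(j)}|$. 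So it remains to show $|t_{(j)}-\tilde t_{(j)}|\le \kappa/(8n\tilde\delta)$, which makes the second term at most $\rho\kappa/(8n)$.

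The parameter-mismatch bound $|t_{(j)}-\tilde t_{(j)}|\le \kappa/(8n\tilde\delta)$ is the main step. Since $P_x'(\cdot;\tilde n,\tilde{\mathbf{x}})\ge\tilde\delta>0$ on $[0,1]$, the mean value theorem (equivalently, the inverse function having derivative at most $1/\tilde\delta$) gives
\[
\tilde\delta\,|t_{(j)}-\tilde t_{(j)}| \le \bigl|P_x(t_{(j)};\tilde n,\tilde{\mathbf{x}})-P_x(\tilde t_{(j)};\tilde n,\tilde{\mathbf{x}})\bigr|.
\]
By the defining relations, $P_x(\tilde t_{(j)};\tilde n,\tilde{\mathbf{x}})=X_{(j)}=P_x(t_{(j)};n,\mathbf{x})$, so the right-hand side equals $|P_x(t_{(j)};\tilde n,\tilde{\mathbf{x}})-P_x(t_{(j)};n,\mathbf{x})|$. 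Theorem~\ref{theorem} bounds this by $\kappa/(8n)$, which is exactly what is needed.

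One subtlety to check is that $t_{(j)}$ exists and is unique, so that the comparison is well defined. This holds because $(\mathbf{x},\mathbf{z})\in\mathcal{C}_n\subseteq\mathcal{S}_n$, so $P_x(\cdot;n,\mathbf{x})$ is strictly increasing from $X_{(1)}$ to $X_{(m)}$ (the endpoints match since $x_0,x_n$ are the endpoint evaluations of the tilde curve). With that confirmed, combining the pieces gives $|e_j-\tilde e_j|\le(1+\rho)\kappa/(8n)$, and multiplying by $|e_j+\tilde e_j|\le 2$ and averaging completes the argument.
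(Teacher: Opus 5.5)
Your proposal is correct and follows the paper's proof essentially step for step. It uses the same triangle inequality through the intermediate value $P_z(t_{(j)};\tilde n,\tilde{\mathbf{z}})$, the same mean-value-theorem argument turning the parameter mismatch $|t_{(j)}-\tilde t_{(j)}|$ into an $x$-coordinate gap controlled by Theorem~\ref{theorem} and $\tilde\delta$, and the same factorization $a^2-b^2=(a-b)(a+b)$ with the factor-$2$ bound; your extra checks that all values lie in $[0,1]$ and that $t_{(j)}$ exists uniquely (via $\mathcal{C}_n\subseteq\mathcal{S}_n$) are sound and are details the paper leaves implicit.
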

\begin{proof}{Proof.}
    See Appendix \ref{appendix:proofs}.
\end{proof}

We therefore restrict the search to $\mathcal{C}_n$ by replacing the two semi-infinite constraints with $2n$ linear inequalities to obtain the restricted fitting problem \eqref{eq:restricted}:

 {%
  \setlength{\abovedisplayskip}{3pt}%
  \setlength{\belowdisplayskip}{3pt}%
  \setlength{\abovedisplayshortskip}{0pt}%
  \setlength{\belowdisplayshortskip}{0pt}%
  \setlength{\jot}{1pt}
  \begin{subequations} \label{eq:restricted}
  \begin{align}
      \min_{\mathbf{x},\mathbf{z}, \mathbf{t}} \quad & \frac{1}{m} \sum_{j=1}^{m}\left( F_m(X_{(j)}) - P_z(t_{(j)};n,\mathbf{z}) \right)^2
  \label{obj:nonlinear} \\
      \text{s.t.} \quad
      & P_x( t_{(j)};n,\mathbf{x}) = X_{(j)}, \quad 0 \leq t_{(j)} \leq 1, && j=1,\ldots,m \label{constraint:sample} \\
      & x_i \leq x_{i+1}, \quad z_i \leq z_{i+1}, && i=0,\ldots,n-1 \label{constraint:orderingx} \\
      & z_0 = 0, \; z_n = 1, \quad x_0 = X_{(1)}, \; x_n = X_{(m)}. \label{constraint:z_n=1}
  \end{align}
  \end{subequations}
  }%

\noindent Although the parametric equality constraint \eqref{constraint:sample} renders the optimization problem nonconvex, the restricted formulation remains tractable. It can be solved using standard off-the-shelf nonlinear solvers, such as IPOPT \citep{Wachter2006}.

\subsection[Projecting onto C]{Projecting onto $\mathcal{C}$} \label{sec:projection}
To leverage the convexity of set $\mathcal{C}_n$, we propose a projected gradient descent (PGD) methodology. The viability of this approach relies on the existence of an efficient projection onto $\mathcal{C}_n$. Unless otherwise stated, the degree is assumed to be fixed, and we write $\mathcal{C}$ in place of $\mathcal{C}_n$ ($\mathcal{S}$ in place of $\mathcal{S}_n$) for notational simplicity.

The set $\mathcal{C}$ factors as $\mathcal{C} = \mathcal{C}_x \times \mathcal{C}_z$, where
{\Equationvalidatefalse
\begin{equation*}
\begin{aligned}
    \mathcal{C}_x &= \{\mathbf{x}\in\mathbb{R}^{(n+1)} \mid x_0 = X_{(1)}, x_n = X_{(m)}, x_i \leq x_{i+1}, i=0,\ldots,n-1\},\\
    \mathcal{C}_z &= \{\mathbf{z}\in\mathbb{R}^{(n+1)} \mid z_0 = 0, z_n = 1, z_i \leq z_{i+1}, i=0,\ldots,n-1\}.
\end{aligned}
\end{equation*}}
Since $\mathcal{C}_x$ and $\mathcal{C}_z$ do not have coupling constraints, the projection onto $\mathcal{C}$ decomposes into the cartesian product of the componentwise projections $\pi_{\mathcal{C}_x}(\mathbf{a})$ and $\pi_{\mathcal{C}_z}(\mathbf{b})$. Without loss of generality, we focus on computing $\pi_{\mathcal{C}_x}(\mathbf{a})$, for which we introduce the well-known isotonic regression problem \citep{Brunk1955, Barlow1972} defined by the quadratic program \eqref{eq:iso}
{\Equationvalidatefalse
\begin{equation*}
\min_{\mathbf{x}\in\mathbb{R}^{n+1}} \Bigl\{ \textstyle\sum_{i=0}^{n}\left( a_i - x_i \right)^2 \ : \ x_i \leq x_{i+1},\ i=0,\ldots,n-1; \ \ L \leq x_i \leq U,\ i=0,\ldots,n \Bigr\}. \tag{ISO}\label{eq:iso}
\end{equation*}}
Isotonic regression is efficiently solvable in $\mathcal{O}(n)$ time using the pool adjacent violators algorithm (PAVA) \citep{Barlow1972}, for which standard implementations are available \citep{Balabdaoui2011}.

Standard PAVA implementations do not natively accommodate the boundary equality constraints $x_0=L$ and $x_n=U$. However, as we notice in Proposition \ref{proposition:projeq}, we can reformulate this constrained problem into a standard isotonic regression in $(n-1)$ variables. This observation enables the direct application of PAVA to efficiently compute the projection $\pi_{\mathcal{C}_x}(\mathbf{a})$.
\begin{proposition} \label{proposition:projeq}
    Solving problem \eqref{eq:iso} with the equality constraints $x_0 = L$ and $x_n = U$ is equivalent to solving the $(n-1)$-variable isotonic regression problem
    \begin{equation} \label{eq:iso-equiv}
\min_{\mathbf{x} \in \mathbb{R}^{n-1}} \Bigl\{ \textstyle\sum_{i=1}^{n-1}\left( a_i - x_i \right)^2 \ : \ x_i \leq x_{i+1},\ i=1,\ldots,n-2; \ \ L \leq x_i \leq U,\ i=1,\ldots,n-1 \Bigr\}.
    \end{equation}
 In particular, if $\mathbf{x}^*$ is a solution for \eqref{eq:iso-equiv}, then $(L, \mathbf{x}^*, U)$ is a solution for \eqref{eq:iso} with equality constraints $x_0 = L$ and $x_n = U$.
\end{proposition}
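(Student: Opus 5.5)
The argument is a direct substitution followed by a check that the box constraints make the ordering constraints at the two ends redundant. We assume $L \leq U$, which holds here since $L = X_{(1)} \leq X_{(m)} = U$; otherwise both problems are infeasible. The plan is to show that the two problems have the same feasible points, up to appending the fixed endpoints, and objective functions that differ by a constant.

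\emph{Step 1 (reduction of the objective).} Impose $x_0 = L$ and $x_n = U$ in \eqref{eq:iso}. The objective becomes
\[
(a_0 - L)^2 + (a_n - U)^2 + \textstyle\sum_{i=1}^{n-1}(a_i - x_i)^2 .
\]
The first two terms do not depend on the free variables $(x_1,\ldots,x_{n-1})$, so minimizing the constrained \eqref{eq:iso} amounts to minimizing $\sum_{i=1}^{n-1}(a_i-x_i)^2$ over the free variables, subject to the remaining constraints.

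\emph{Step 2 (reduction of the constraints).} With $x_0 = L$ and $x_n = U$ fixed, the constraints of \eqref{eq:iso} on the free variables are:
\begin{itemize}
\item[(i)] the interior ordering constraints $x_i \leq x_{i+1}$ for $i=1,\ldots,n-2$;
\item[(ii)] the two end ordering constraints $L \leq x_1$ and $x_{n-1} \leq U$;
\item[(iii)] the box constraints $L \leq x_i \leq U$ for $i=1,\ldots,n-1$;
\item[(iv)] the box constraints on $x_0$ and $x_n$, namely $L \leq L \leq U$ and $L \leq U \leq U$, which hold trivially because $L\leq U$.
\end{itemize}
The end constraints in (ii) are special cases of the box constraints in (iii). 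Hence the feasible set for $(x_1,\ldots,x_{n-1})$ is exactly the feasible set of \eqref{eq:iso-equiv}.

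\emph{Step 3 (equivalence of solutions).} Steps 1 and 2 give a bijection $\mathbf{x} \mapsto (L,\mathbf{x},U)$ between the feasible set of \eqref{eq:iso-equiv} and the feasible set of the equality-constrained \eqref{eq:iso}. Under this map the objective values differ by the constant $(a_0-L)^2+(a_n-U)^2$. Therefore minimizers correspond: if $\mathbf{x}^*$ solves \eqref{eq:iso-equiv}, then $(L,\mathbf{x}^*,U)$ solves the constrained \eqref{eq:iso}, and conversely.

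Existence and uniqueness of the minimizer follow because the problem minimizes a strictly convex quadratic over a nonempty closed convex set. The set is nonempty since, for example, $x_i = L$ for all $i$ is feasible.

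The only real obstacle is the bookkeeping in Step 2: recognizing that the end ordering constraints are implied by the bounds, and that $L\le U$ is needed for feasibility of the fixed endpoints. Everything else is routine.
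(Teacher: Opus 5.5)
Your proof is correct. The paper gives no written proof of this proposition; it states it as an observation (``as we notice in Proposition~\ref{proposition:projeq}'') with no appendix proof, and your argument is the routine verification it leaves implicit: fixing $x_0=L$, $x_n=U$ leaves a constant plus $\sum_{i=1}^{n-1}(a_i-x_i)^2$, and the end ordering constraints $L\le x_1$, $x_{n-1}\le U$ coincide with box constraints. One point worth adding for how the paper uses this result is that, when computing $\pi_{\mathcal{C}_x}$, the box constraints in \eqref{eq:iso} are themselves implied by the chain $L=x_0\le x_1\le\cdots\le x_n=U$, so the equality-constrained \eqref{eq:iso} is exactly the projection onto $\mathcal{C}_x$.
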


\subsection{Projected gradient descent} \label{sec:PGD}
While general-purpose solvers offer a practical approach for solving \eqref{eq:restricted}, the method is not reliable on real-world data and is not fast enough for real-time and large-scale applications, as demonstrated in Section~\ref{sec:experiments}. To address this, we introduce a formulation that ensures convexity by fixing $\mathbf{x}$ and allows the use of a PGD algorithm with convergence guarantees. As mentioned before, the constraints in~\eqref{constraint:sample} are required to evaluate the objective function \eqref{obj:nonlinear} at the sample points \( X_{(j)} \). However, by fixing the $x$-coordinates of the control points, the corresponding parameters $t_{(j)}$ become constant for all $j=1,...,m$. This restriction reduces the original nonlinear problem to the convex quadratic problem below
\begin{equation} \label{eq:convexQP}
\min_{\mathbf{z}} \Bigl\{ \textstyle\frac{1}{m} \sum_{j=1}^{m}\left( F_m(X_{(j)}) - P_z(t_{(j)};n,\mathbf{z}) \right)^2 \ : \ z_i \leq z_{i+1},\ i=0,\ldots,n-1; \ \ z_0 = 0,\ z_n = 1 \Bigr\}.
\end{equation}
Note that \eqref{eq:convexQP} further restricts the feasible set of \eqref{eq:restricted}, so its optimal MSE is necessarily no smaller; additionally, the approach requires a principled choice for fixing the $x$-coordinates. Fixing $\mathbf{x}$ also decouples the problem size from the sample size: the $m$ parameters $t_{(j)}$ and their associated constraints in~\eqref{constraint:sample} are eliminated, leaving only the $n+1$ variables $\mathbf{z}$ regardless of $m$, which is what renders the formulation tractable as the sample grows.

Since $t_{(j)}$ is fixed for all $j=1,...,m$, we define $\mathbf{b}_j \in \mathbb{R}^{n+1}$ with entries $(\mathbf{b}_j)_i = B_{n,i}(t_{(j)})$ for $i=0,\dots,n$, so that the expression $P_z(t_{(j)};n,\mathbf{z}) = \sum_{i=0}^{n} B_{n,i}(t_{(j)})\,z_i =\mathbf{b}^\top_j \mathbf{z}$ is clearly linear in $\mathbf{z}$. Consequently, problem \eqref{eq:convexQP} reduces to a classic constrained linear least squares problem

\[\min_\mathbf{z}\left\{\frac{1}{m}\Vert \mathbf{c} - \mathbf{B}\mathbf{z} \Vert_2^2 \ \text{ s.t. } z_0 = 0, \ z_n = 1, \ z_i \leq z_{i+1}, \ i=0,\ldots,n-1\right\},\]
where $c_j = F_m(X_{(j)})$ for each $j=1,...,m$ and $\mathbf{B}\in \mathbb{R}^{m\times (n+1)}$ is a matrix with rows $\mathbf{b}_j$ for $j=1,...,m$. The gradient of $f(\mathbf{z}) = \frac{1}{m}\Vert \mathbf{c} - \mathbf{B}\mathbf{z} \Vert_2^2$ is given by $\nabla f(\mathbf{z}) = \frac{2}{m}\mathbf{B}^\top(\mathbf{B}\mathbf{z}-\mathbf{c})$ and the Hessian is $\nabla^2f(\mathbf{z}) = \frac{2}{m}\mathbf{B}^\top\mathbf{B}$. Furthermore, $f(\mathbf{z})$ is $\beta$-smooth, that is $\nabla f(\mathbf{z})$ is $\beta$-Lipschitz, with $\beta= \frac{2}{m}\lambda_{\text{max}}(\mathbf{B}^\top\mathbf{B})$.

Given the computational efficiency of projecting onto the constraint set $\mathcal{C}_z$, we employ a projected gradient descent algorithm to solve problem \eqref{eq:convexQP}. As mentioned before, prior to execution, the variables \(\mathbf{x}\) must be fixed using a principled, low-cost heuristic. For this purpose, we choose \(x_i\) as the empirical \(\tfrac{i}{n}\)-quantile of the sample $\{X_{j}\}_{j=1}^m$. Given the fixed $x$-coordinates and the corresponding sample parameters $t_{(j)}$ for $j=1,...,m$, Algorithm \ref{alg:projgraditer} receives as input the initial value of the $z$-coordinates of the control points $\mathbf{z}^0$. In each iteration, the algorithm performs a gradient descent update followed by the projection $\Pi_{\mathcal{C}_z}$ (line 3) that enforces the ordering, end-point, and monotonicity constraints using PAVA and the methodology outlined in Section \ref{sec:projection}.

\begin{algorithm}[htbp]
    \SetKwInput{KwParameters}{Parameters}
    \SetKwData{KwRequire}{Require}
    \SetKwData{KwEnsure}{Ensure}

    \caption{PGD}\label{alg:projgraditer}
    \KwParameters{step-size $\eta>0$; number of iterations $N$.}
    \KwIn{initial $z$-coordinates $\mathbf{z}^0$.}

    \For{$k = 0$ to $N-1$}{
        Compute $\nabla f(\mathbf{z}^k)$

        $\mathbf{z}^{k+1} \leftarrow \Pi_{\mathcal{C}_z}\left(\mathbf{z}^{k} - \eta\nabla f(\mathbf{z}^k) \right)$
    }
    \Return $\mathbf{z}^N$
\end{algorithm}

It is well-established that projected gradient descent applied to a convex, $\beta$-smooth function over a convex set with constant step-size $\eta = 1/\beta$ has a sublinear convergence rate in the number of iterations \citep{Nesterov2004, bubeck2015}. Therefore, to achieve a desired accuracy level $\epsilon > 0$, the number of iterations required scales as $N = \mathcal{O}(1/\epsilon)$.

\section{Maximum likelihood estimation} \label{sec:mle}
Maximum likelihood estimation (MLE) is the standard approach for fitting parametric distributions in simulation input modeling \citep{Law_2015}. We now adapt the restriction framework developed for minimum error estimation to this setting. Given the sample $\{X_{(j)}\}_{j=1}^m$, we aim to maximize the joint probability density of the observed sample points. The MLE fitting problem can be formulated as
\begin{equation}
\min_{\mathbf{x},\mathbf{z},\mathbf{t}}\left\{ \mathcal{L}(\mathbf{x},\mathbf{z}) = -\sum_{j=1}^{m} \ln f_{X}(t_{(j)}; n, \mathbf{x}, \mathbf{z}) \quad \text{s.t.} \quad \eqref{constraint:sample_ts}-\eqref{constraint:eqx_0m} \right\},
\end{equation}
where \eqref{constraint:sample_ts}-\eqref{constraint:eqx_0m} define the same feasible set of problem \eqref{eq:nonlinear}.
Substituting the B\'ezier pdf \eqref{eq:pdf}, the negative of the log-likelihood function takes the form
\begin{equation}
\mathcal{L}(\mathbf{x},\mathbf{z}) = \sum_{j=1}^{m} \left[ -\ln \left( \sum_{i=0}^{n-1} B_{n-1,i}(t_{(j)})\Delta z_{i} \right) + \ln \left( \sum_{i=0}^{n-1} B_{n-1,i}(t_{(j)})\Delta x_{i} \right) \right]. \label{eq:obj-nll}
\end{equation}
Since the MLE problem shares the same constraint structure as \eqref{eq:nonlinear} but with a different objective function, we follow the same restriction strategy developed in minimum error estimation, adapting the algorithmic framework as needed to preserve convergence guarantees.

\subsection{Restricted formulation and a primal gradient scheme}

We adopt the same restricted domain $\mathcal{C}$ from which we obtain the problem \[\min_{\mathbf{x},\mathbf{z},\mathbf{t}} \ \{\mathcal{L}(\mathbf{x},\mathbf{z}) \text{ s.t. } \eqref{constraint:sample}-\eqref{constraint:z_n=1}\},\] which is also suitable for off-the-shelf solvers. Furthermore, we apply the strategy of fixing the $x$-coordinates, and consequently $t_{(j)}$ for $j=1,...,m$, as described in Section \ref{sec:PGD}. We further suppose that the $x$-coordinates of the control points are distinct. Under these conditions, the second term of the objective function \eqref{eq:obj-nll} is well-defined and becomes constant. Thus, the optimization reduces to finding the $z$-coordinates that minimize the negative of the sum of logarithms of an affine function
\begin{equation*}
\min_{\mathbf{z}} \Bigl\{ \textstyle-\sum_{j=1}^{m} \ln \bigl(\sum_{i=0}^{n-1} B_{n-1,i}(t_{(j)}) \Delta z_{i} \bigr) \ : \ z_i \leq z_{i+1},\ i=0,\ldots,n-1; \ \ z_0 = 0,\ z_n = 1 \Bigr\}.
\end{equation*}
By introducing the change of variables $w_i = \Delta z_i$ for $i=0,...,n-1$, the feasible region is mapped to the simplex $\Delta^{n-1} = \{\mathbf{w}\in \mathbb{R}^n \mid\mathbf{1}^\top \mathbf{w} = 1,  \  \mathbf{w}\geq  \mathbf{0}\}$. This substitution naturally reduces the dimensionality of the decision space by one. Furthermore, by defining the coefficient vectors $\mathbf{a}_j\in \mathbb{R}^n$ for $j=1,...,m$ with components $(\mathbf{a}_{j})_i = B_{n-1,i}(t_{(j)})$ for $i=0,...,n-1$, the problem can be cast to \eqref{eq:simplexprob} as follows
\begin{equation} \label{eq:simplexprob}
\min_{\mathbf{w}} \Bigl\{ l(\mathbf{w}) =-\textstyle\sum_{j=1}^{m} \ln \left( \mathbf{a}_j^\top \mathbf{w} \right) \ : \ \mathbf{w} \in \Delta^{n-1} \Bigr\}.
\end{equation}
The objective function $l(\mathbf{w})$ is convex, as it is a composition of a convex function $-\ln(\cdot)$ with an affine map. The gradient and Hessian are given respectively by $\nabla l(\mathbf{w}) = - \sum_{j=1}^m\frac{\mathbf{a}_j}{\mathbf{a}_j^\top \mathbf{w}}$ and $\nabla^2l(\mathbf{w}) = \sum_{j=1}^m \frac{\mathbf{a}_j\mathbf{a}^\top_j}{(\mathbf{a}^\top_j \mathbf{w})^2}$. However, $l(\mathbf{w})$ is not $\beta$-smooth. For $t_{(j)}=0$ we have that $\mathbf{a}_j^\top = [1,0,...,0]$, so we can define the feasible vector $\mathbf{w} = [\frac{\epsilon}{n-1}, ... , \frac{\epsilon}{n-1}, 1 -\epsilon]^\top$ with $\epsilon>0$ such that $\frac{\mathbf{a}_j\mathbf{a}^\top_j}{(\mathbf{a}^\top_j \mathbf{w})^2} = \frac{\mathbf{a}_j\mathbf{a}^\top_j}{(\epsilon/(n-1))^2} \to \infty $ as $\epsilon \to 0$ (the same can be argued for $t_{(j)}=1$). Consequently, standard first-order methods like PGD lack theoretical convergence guarantees for this problem, motivating a different approach.
\begin{definition}[Relative smoothness (see, e.g., \citep{Hainao2018})]
Let $\mathcal{Q} \subseteq \mathbb{R}^n$ be a closed convex set with nonempty interior and $l, h : \mathcal{Q} \to \mathbb{R}$ be twice continuously differentiable on $\mathrm{int}(\mathcal{Q} )$.
We say that $l$ is \emph{$\beta$-smooth relative to $h$} on $\mathcal{Q} $ if there exists a constant $\beta > 0$ such that for all $\mathbf{w} \in \mathrm{int}(\mathcal{Q} )$ it holds that $\nabla^2 l(\mathbf{w}) \preceq \beta \nabla^2 h(\mathbf{w}),$
that is, $
\beta \nabla^2 h(\mathbf{w}) - \nabla^2 l(\mathbf{w})$ is positive semi-definite.
\end{definition}

Proposition \ref{prop:relsmooth} provides a reference function for our setting.
Specifically, it shows that $l(\mathbf{w})$ is $m$-smooth relative to the barrier-type function $h(\mathbf{w}) = -\sum_{i=0}^{n-1} \ln(w_i)$, which is convex and twice continuously differentiable on $\mathrm{int}(\Delta^{n-1})$.
\begin{proposition} \label{prop:relsmooth}
$l(\mathbf{w})$ is $m$-smooth with respect to $h(\mathbf{w})=-\sum_{i=0}^{n-1} \ln(w_i)$ on $\Delta^{n-1}$.
\end{proposition}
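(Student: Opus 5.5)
The plan is to verify the Hessian inequality $\nabla^2 l(\mathbf{w}) \preceq m\,\nabla^2 h(\mathbf{w})$ directly on $\mathrm{int}(\Delta^{n-1})$. We do this by testing both sides against an arbitrary direction $\mathbf{v}\in\mathbb{R}^n$, and it suffices to prove the inequality one summand at a time.

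First write the two quadratic forms. From the Hessians displayed before the definition of relative smoothness,
\[
\mathbf{v}^\top \nabla^2 l(\mathbf{w})\mathbf{v} = \sum_{j=1}^m \frac{(\mathbf{a}_j^\top \mathbf{v})^2}{(\mathbf{a}_j^\top \mathbf{w})^2},
\qquad
\mathbf{v}^\top \nabla^2 h(\mathbf{w})\mathbf{v} = \sum_{i=0}^{n-1} \frac{v_i^2}{w_i^2}.
\]
Since there are exactly $m$ terms on the left, it is enough to show, for each fixed $j$,
\[
\frac{(\mathbf{a}_j^\top \mathbf{v})^2}{(\mathbf{a}_j^\top \mathbf{w})^2} \le \sum_{i=0}^{n-1} \frac{v_i^2}{w_i^2}.
\]
Summing over $j$ then gives the factor $m$.

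The per-term inequality uses only the fact that the entries $(\mathbf{a}_j)_i = B_{n-1,i}(t_{(j)})$ are nonnegative. They also sum to one by the partition of unity of the Bernstein basis, though that is not needed. Also $w_i>0$ in the interior, so $\mathbf{a}_j^\top\mathbf{w}>0$.

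Set weights $\lambda_i = a_{j,i} w_i/(\mathbf{a}_j^\top\mathbf{w})$. These are nonnegative and sum to one. Then
\[
\frac{\mathbf{a}_j^\top \mathbf{v}}{\mathbf{a}_j^\top \mathbf{w}} = \sum_i \lambda_i \frac{v_i}{w_i},
\]
a convex combination of the ratios $v_i/w_i$. By Jensen's inequality for $u\mapsto u^2$, or equivalently Cauchy--Schwarz,
\[
\Bigl(\sum_i \lambda_i \tfrac{v_i}{w_i}\Bigr)^2 \le \sum_i \lambda_i \tfrac{v_i^2}{w_i^2} \le \max_i \tfrac{v_i^2}{w_i^2} \le \sum_i \tfrac{v_i^2}{w_i^2}.
\]
This yields the per-term bound.

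The only subtle point is recognizing that $\mathbf{a}_j^\top\mathbf{v}/\mathbf{a}_j^\top\mathbf{w}$ is a weighted average of $v_i/w_i$ with nonnegative weights; this is where nonnegativity of the Bernstein coefficients is essential. Everything else is bookkeeping. We also note three routine checks. Both $l$ and $h$ are $C^2$ on $\mathrm{int}(\Delta^{n-1})$, where $\mathbf{a}_j^\top\mathbf{w}>0$ because some $a_{j,i}>0$. The statement is understood on the interior, as the definition requires. The constant is $m$ regardless of $n$.
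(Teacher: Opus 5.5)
Your proof is correct and is essentially the paper's argument: the paper uses the same weights $a_{ji}w_i/(\mathbf{a}_j^\top\mathbf{w})$ with Jensen's inequality, and its last step, swapping the sums and using $a_{ji}w_i \le \mathbf{a}_j^\top\mathbf{w}$ (each weight is at most one), amounts to your per-term bound. One small point: the partition of unity you call unnecessary is what guarantees some $a_{ji}>0$, hence $\mathbf{a}_j^\top\mathbf{w}>0$, and your argument does rely on that.
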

\begin{proof}{Proof.}
    See Appendix \ref{apx:proofs}.
\end{proof}

Therefore, problem \eqref{eq:simplexprob} satisfies the relative smoothness condition required by the framework of \citet{Hainao2018}. At each iteration $k$, their primal gradient scheme computes $\nabla l(\mathbf{w}^k)$ and updates the iterate by solving the subproblem $$\mathbf{w}^{k+1} \leftarrow
\displaystyle\argmin_{\mathbf{w} \in \Delta^{n-1}}
\left\{
l(\mathbf{w}^k)
+ \langle \nabla l(\mathbf{w}^k), \mathbf{w} - \mathbf{w}^k \rangle
+ m D_h(\mathbf{w}, \mathbf{w}^k)
\right\},$$ where $D_h(\mathbf{w}, \mathbf{w}^k)=h(\mathbf{w})-h(\mathbf{w}^k) - \langle \nabla h(\mathbf{w}^k), \mathbf{w} -\mathbf{w}^k \rangle$ denotes the Bregman distance induced by $h(\cdot)$. Since their algorithmic scheme only requires that the objective function be relatively smooth with respect to a convex reference function to achieve sublinear convergence, a condition met in Proposition \ref{prop:relsmooth}, we inherit the same convergence guarantees.

Nonetheless, we still need an efficient way to solve the subproblem. Removing the constant terms we get to the convex optimization problem $$\min_{\mathbf{w} \in \Delta^{n-1}}
\langle \nabla l(\mathbf{w}^k), \mathbf{w}\rangle
+ m(h(\mathbf{w}) - \langle \nabla h(\mathbf{w}^k), \mathbf{w}\rangle ).
$$
The first order optimality conditions are given by $\sum_{i=0}^{n-1} \frac{m}{\nabla l(\mathbf{w}^k)_i + m/w_i^k + \lambda} = 1$, $w_i> 0$, and $w_i= \frac{m}{\nabla l(\mathbf{w}^k)_i + m/w_i^k +\lambda}$ for $i=0,...,n-1$, where $\lambda\in \mathbb{R}$ is the Lagrange multiplier for the equality in the simplex constraint in \eqref{eq:simplexprob}. It only remains to find the value of $\lambda$. Observe that the valid domain for $\lambda$ is $(s, \infty)$ with $s = -\min_i \{\nabla l(\mathbf{w}^k)_i + m/w_i^k\}$ and $\phi(\lambda) =\sum_{i=0}^{n-1} \frac{m}{\nabla l(\mathbf{w}^k)_i + m/w_i^k + \lambda}$ is strictly decreasing with respect to $\lambda$. Since $\lim_{\lambda \downarrow s}\phi(\lambda) = +\infty$ and $\lim_{\lambda \to \infty}\phi(\lambda) = 0$ by continuity and strict monotonicity there exists a unique $\lambda^*$ such that $\phi(\lambda^*)=1$. Thus, we can use Newton's method or the bisection method to find $\lambda^*$ efficiently. Putting all of this together we arrive at the primal gradient scheme displayed in Algorithm \ref{alg:primal_gradient}.

\begin{algorithm}[htbp]
    \SetKwInput{KwParameters}{Parameters}
    \SetKwData{KwRequire}{Require}
    \SetKwData{KwEnsure}{Ensure}

    \caption{Primal gradient scheme}
    \label{alg:primal_gradient}
    \KwParameters{number of iterations $N$.}
    \KwIn{initial point $\mathbf{w}^0 \in \Delta^{n-1}$.}

    \For{$k = 0$ to $N-1$}{
        Compute $\nabla l(\mathbf{w}^k)$

        Compute $\lambda^*$ by solving $\phi(\lambda) = 1$

        $w^{k+1}_i \leftarrow \frac{m}{\nabla l(\mathbf{w}^k)_i +m/w_i^k +\lambda^*}$ \hspace{2cm} $i=0,...,n-1$
    }

    \Return $\mathbf{w}^N$
\end{algorithm}


\section{A note on convolutions} \label{sec:conv}
Let $X$ and $Y$ be two independent B\'ezier random variables of degree $n_X$ and $n_Y$ defined by the sets of control points $\mathcal{P}_X = \left\{\mathbf{p}_i^{X} = (x_i^{X},z_i^{X})^\top \right\}^{n_X}_{i=0}$ and $ \mathcal{P}_Y=\left\{\mathbf{p}_j^{Y} = (x_j^{Y},z_j^{Y})^\top \right\}_{j=0}^{n_Y},$ respectively, where $\mathcal{P}_X \in \mathcal{S}_{n_X}, \mathcal{P}_Y\in \mathcal{S}_{n_Y}$. Since $X$ and $Y$ are independent, the cdf of their convolution $Z = X + Y$ is given by
    \begin{equation}
        F_Z(z) = \int_{-\infty}^{\infty}f_X(x(t_X)) \left [ F_Y(z-x(t_X)) \right ]  \,dx(t_X).\label{eq:convolution1}
    \end{equation}
Evaluating \eqref{eq:convolution1} in closed form requires, in particular, an
explicit expression for the inner cdf as a function of the
integration variable. Recall that a Bézier cdf is defined only parametrically: to obtain $F_X(x)$ one must recover $t$ from $P_x(t;n_X,\mathbf x)=x$ and then evaluate
$P_z(t;n_X,\mathbf z)$, so $F_X=P_z\circ P_x^{-1}$. The problematic term is the inverse $P_x^{-1}$, since, in general, no elementary formula exists for it (not solvable by radicals) when $n_X\geq 5$ by the Abel's Theorem on the insolvability of the general quintic \citep{DummitFoote2004}. This already rules out a closed form for the constituent cdf $F_Y$ appearing in \eqref{eq:convolution1}. Thus, we cannot expect arbitrary Bézier distributions to admit a tractable analytical convolution.

We therefore approximate the convolution by Monte Carlo. Given $M$ constituent B\'ezier cdfs $\{F_j\}_{j=1}^M$ and a sample size $N$, the procedure repeats, for $i=1,\ldots,N$, drawing independent uniform variates $u_1^i,\ldots,u_M^i\sim\mathcal{U}(0,1)$ and forming an aggregate sample $s_i = \sum_{j=1}^{M} F_j^{-1}(u_j^i)$ by inverse-transform sampling of each variable. It then fits a new B\'ezier distribution to the resulting samples $\{s_i\}_{i=1}^N$ using the methods of the preceding sections, returning the approximate aggregate cdf $\hat{F}_{\mathrm{agg}}$.

\section{The \texttt{bezierv} Python package} \label{sec:package}

The \texttt{bezierv} Python package provides: (i) the B\'ezier random variable (Section~\ref{sec:cdf}), (ii) estimation routines for sample data (Sections~\ref{sec:fitting} and \ref{sec:mle}), (iii) convolution utilities (Section~\ref{sec:conv}), (iv) random variate generation, and (v) an interactive fitting component. The implementation is in Python and builds on \texttt{NumPy} \citep{Harris2020}. The package is distributed via the Python Package Index (PyPI) and can be installed with \verb|pip install bezierv|. Source code and the issue tracker are hosted on GitHub~\href{https://github.com/EstebanLeiva/bezierv}{EstebanLeiva/bezierv}, and the API documentation is available at \href{https://estebanleiva.github.io/bezierv/}{estebanleiva.github.io/bezierv/}. The codebase includes unit tests and continuous-integration workflows to promote correctness and reproducibility across supported Python versions: 3.10, 3.11, 3.12, 3.13.

\subsection{Package structure}
The package has three core classes, \texttt{Bezierv}, \texttt{DistFit}, and \texttt{Convolver}, that mirror the functionalities outlined above. \texttt{Bezierv} encapsulates a B\'ezier random variable, storing its support and control-point parameterization, and exposes evaluation utilities such as random sampling, quantile evaluation, moment computation, and visualization. \texttt{DistFit} fits a B\'ezier distribution to empirical data and returns a \texttt{Bezierv} instance by providing an interface to access any of  the estimation algorithms presented in Section~\ref{sec:fitting} and \ref{sec:mle}. \texttt{Convolver} takes a collection of \texttt{Bezierv} objects and approximates the distribution of their sum via Monte Carlo simulation. Figure~\ref{fig:package_structure} depicts the high-level structure of the package and the relationships among these three classes.

\begin{figure}[htbp]
    \centering
    \resizebox{0.7\textwidth}{!}{%
    \begin{tikzpicture}[
            font=\small\sffamily,
            >={Stealth[length=2mm,width=1.5mm]},
            class/.style={
                rectangle split, rectangle split parts=3,
                rectangle split draw splits=true,
                draw=black!75, line width=0.5pt,
                rounded corners=1pt,
                text width=38mm, align=left,
                inner sep=3pt,
                fill=black!2,
            },
            head/.style={font=\small\sffamily\bfseries},
            pkg/.style={
                draw=black!55, line width=0.4pt, dashed,
                rounded corners=2pt, inner sep=5pt,
                fill=black!4,
            },
            dep/.style={->, draw=black!70, line width=0.5pt, dashed},
            edgelab/.style={font=\scriptsize\sffamily, fill=white, inner sep=1pt},
        ]

        \node[font=\footnotesize\sffamily, align=center] (solvers) {%
            \textbf{Algorithms and Solvers}\\[3pt]
            \scriptsize MSE\ $\cdot$\ MLE\\[-1pt]
        };
        \begin{scope}[on background layer]
            \node[pkg, fit=(solvers)] (solversbox) {};
        \end{scope}

        \node[class, right=18mm of solversbox] (distfit) {
            \nodepart[head]{one}DistFit
            \nodepart{two}
                data,\ n,\ options
            \nodepart{three}
                \(+\) fit(method, algorithm)\\
                \quad$\rightarrow$ Bezierv
        };

        \node[class, right=18mm of distfit] (bezierv) {
            \nodepart[head]{one}Bezierv
            \nodepart{two}
                n,\ controls\_x,\ controls\_z
            \nodepart{three}
                \(+\) cdf,\ pdf,\ quantile\\
                \(+\) random,\ moments\\
                \(+\) plot
        };

        \node[class, below=14mm of bezierv] (conv) {
            \nodepart[head]{one}Convolver
            \nodepart{two}
                list\_bezierv : List$\langle$Bezierv$\rangle$
            \nodepart{three}
                \(+\) convolve\\
                \quad$\rightarrow$ Bezierv
        };

        \draw[dep] (distfit.west) -- node[edgelab,above]{$\langle\!\langle$delegates$\rangle\!\rangle$}
            (solversbox.east);

        \draw[dep] (distfit.east) -- node[edgelab,above]{$\langle\!\langle$produces$\rangle\!\rangle$}
            (bezierv.west);

        \draw[dep] (conv.north) -- node[edgelab,right]{$\langle\!\langle$operates on$\rangle\!\rangle$}
            (bezierv.south);

    \end{tikzpicture}}
    \caption{High-level package structure.}
    \label{fig:package_structure}
\end{figure}

\subsection{Input modeling usage example}
\setcounter{listing}{1} 
\setcounter{figure}{2} 
We walk through a complete input-modeling workflow on a synthetic manufacturing dataset. An injection-molding machine fills four mold cavities whose finished parts drop into a single bin, so the producing cavity is unknown. All cavities target a $10$~cm diameter, but wear makes them drift: cavities~1 and~2 run under-sized, cavity~3 stays calibrated, and cavity~4 runs over-sized, so the modeler receives a single pooled, multimodal sample of diameters to turn into a distribution for a downstream simulation. Figure~\ref{lst:snippet} carries out this workflow, described step by step below.
\begin{listing}[htbp]
\caption{Fitting and using a B\'ezier distribution for the molding data.}
\label{lst:snippet}
\begin{lstlisting}
import numpy as np
from bezierv import DistFit

# Step 1: pooled diameter measurements collected from the bin
data = ...                        # 1-D array, one diameter per part

# Step 2: fit a Bézier distribution by minimum squared error
fitter = DistFit(data, n=15)      # n = polynomial degree
bz, mse = fitter.fit(method='mse', algorithm='solver')

# Step 3: assess the fit
bz.plot_cdf(data)                 # fitted CDF over the empirical CDF
bz.plot_pdf(data)                 # fitted PDF over the data histogram
mean, variance = bz.mean(), bz.variance()
q10, q25 = bz.quantile(0.10), bz.quantile(0.25)
q75, q90 = bz.quantile(0.75), bz.quantile(0.90)

# Step 4 (manual): refine bz in the interactive component — see text

# Step 5: draw variates from the refined distribution
variates = bz.random(10_000, rng=42)
\end{lstlisting}
\end{listing}

\emph{Step 1 (data).} The input is a one-dimensional array of pooled diameters; here we generate it synthetically as a mixture of four normals (one per cavity) for reproducibility, though in practice it is collected directly from the process.

\emph{Step 2 (fit).} A single \texttt{DistFit}/\texttt{fit} call returns a fitted \texttt{Bezierv} object \texttt{bz} and its mean squared error; we use minimum-error estimation (\texttt{method='mse'}) with IPOPT (\texttt{algorithm='solver'}) and a curve of degree $15$, expressive enough to capture the multimodality.

\emph{Step 3 (assess).} The fit is checked graphically and numerically: \texttt{bz.plot\_cdf} and \texttt{bz.plot\_pdf} overlay the B\'ezier cdf and pdf. Figure~\ref{fig:multimodal_fit} shows the fitted pdf and cdf with their control points---degree $15$ means sixteen control points, several so close as to be visually indistinguishable---reproducing the three process modes: an under-sized peak on the left, the dominant nominal $10$~cm peak in the center, and cavity~4 on the right. Table~\ref{fig:descriptive_stats_multimodal} reports descriptive statistics from \texttt{bz.mean}, \texttt{bz.variance}, and \texttt{bz.quantile}, which closely match their sample counterparts.
\begin{figure}[htbp]
    \centering
    \begin{minipage}{0.42\textwidth}
        \centering
        \resizebox{\textwidth}{!}{%
        \input{multimodal.pgf}}
        \captionof{figure}{Fitted B\'ezier PDF and CDF with control points.}
        \label{fig:multimodal_fit}
    \end{minipage}
    \hfill
    \begin{minipage}{0.42\textwidth}
        \centering
        {\scriptsize\setlength{\tabcolsep}{3pt}
        \begin{tabular}{lrr}
        \toprule
        Statistic & \multicolumn{1}{c}{Sample} & \multicolumn{1}{c}{B\'ezier fit} \\
        \midrule
        $q_{0.10}$ & 9.5  & 9.5  \\
        $q_{0.25}$ & 9.7  & 9.7  \\
        $q_{0.75}$ & 10.2 & 10.2 \\
        $q_{0.90}$ & 10.5 & 10.5 \\
        Mean       & 10.0 & 10.0 \\
        Variance   & 0.1  & 0.1  \\
        \bottomrule
        \end{tabular}}
        \captionof{table}{Descriptive statistics.}
        \label{fig:descriptive_stats_multimodal}
    \end{minipage}
\end{figure}

\emph{Step 4 (refine with domain knowledge).} The automatic fit reflects only the pooled sample, but the modeler knows a scheduled maintenance on the under-sized cavities will return them to the nominal diameter, moving the mass in the left peak to the central, on-target one. Using the interactive component, the modeler drags the control points to flatten the left peak and concentrate mass at $10$~cm. Figures~\ref{fig:ui} and~\ref{fig:multimodal_manual} show the interactive component and the hand-adjusted distribution, in which the central peak now dominates.
\begin{figure}[htbp]
    \centering
    \begin{subfigure}{0.27\textwidth}
        \centering
        \includegraphics[width=\textwidth]{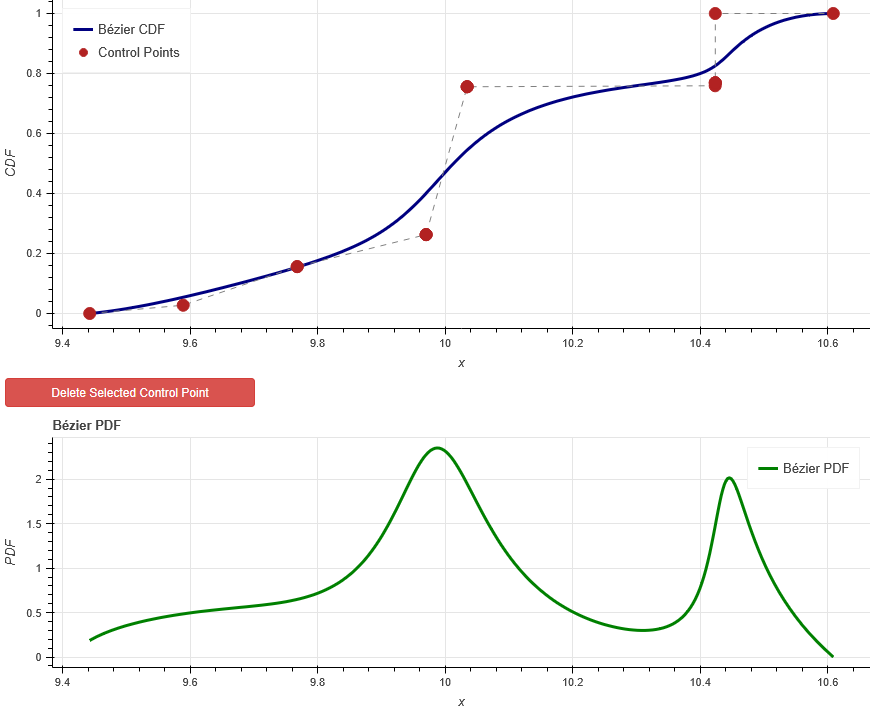}
        \caption{Interactive fitting component.}
        \label{fig:ui}
    \end{subfigure}
    \hfill
    \begin{subfigure}{0.32\textwidth}
        \centering
        \resizebox{\textwidth}{!}{%
        \input{multimodal_manual.pgf}}
        \caption{Hand-adjusted B\'ezier r.v.}
        \label{fig:multimodal_manual}
    \end{subfigure}
    \hfill
    \begin{subfigure}{0.32\textwidth}
        \centering
        \resizebox{\textwidth}{!}{%
        \input{multimodal_manual_hist.pgf}}
        \caption{Variates from the adjusted r.v.}
        \label{fig:sample}
    \end{subfigure}
    \caption{Manual editing and random sampling with \texttt{bezierv}.}
    \label{fig:both}
\end{figure}

\emph{Step 5 (generate variates).} A single \texttt{bz.random} call then draws independent variates by inverse-transform sampling, ready to feed a simulation engine. Figure~\ref{fig:sample} shows the histogram of $10{,}000$ draws, which reproduces the refined shape with its dominant central peak.

\section{Numerical experiments} \label{sec:experiments}
Given the documented advantages of B\'ezier distributions in the simulation literature \citep{Kuhl2010}, we now turn to computational experiments that confirm these findings and, more importantly, demonstrate that our fitting methods are significantly faster than existing approaches. Specifically, we evaluate the proposed B\'ezier fitting algorithms against the derivative-free methods of \citet{Wagner1993} and compare the B\'ezier fit with the generalized beta distribution and the Johnson translation system. All experiments were conducted on a machine with a 12th Gen Intel(R) Core(TM) i9-12900H 2.50 GHz processor and 32 GB of RAM and are available at \href{https://github.com/EstebanLeiva/bezierv-experiments}{EstebanLeiva/bezierv-experiments} for reproducibility. The evaluation proceeds in two stages: first, synthetic datasets are used to examine the ability of each method to recover a broad range of distributional shapes under controlled conditions; second, real-world transportation data is used to assess robustness and modeling flexibility in practical settings.

Recall that we derived two tractable approximations of the B\'ezier minimum error estimation problem \eqref{eq:nonlinear}: the restricted formulation \eqref{eq:restricted}, which enforces monotonicity on the control points and is amenable to IPOPT \citep{Wachter2006}, and the convex quadratic program \eqref{eq:convexQP}, obtained by further fixing the $x$-coordinates, which we solve with PGD. The same restrictions apply to the MLE problem, for which we developed the primal gradient scheme with identical convergence guarantees; we therefore focus the algorithmic comparison on minimum error estimation, as the analysis carries over directly. Since the MSE objective \eqref{obj:sumofsquares} is bounded below by zero, the quality of these restrictions can be directly assessed from the attained objective values. As we demonstrate below, both formulations consistently achieve average MSE values on the order of $10^{-2}$ or smaller, confirming that the restrictions incur negligible loss in fit quality despite providing only upper bounds on the optimal value of the original problem.

\subsection{Synthetic data}
We test the B\'ezier-based fitting procedure on probability distributions commonly used in simulation input modeling~\citep{Law_2015}, as well as on bimodal and trimodal Gaussian mixtures designed to capture multimodal behavior.
For each distribution family, we generate 200 instances with randomly sampled parameters (e.g., mean and variance), and draw 1{,}000 observations from each instance. For each dataset, we fit a Bézier distribution of degree $n=10$ using the proposed primal gradient scheme for MLE, with a maximum of 1{,}000 iterations and an early stopping criterion based on $\frac{\|\mathbf{w}^{k+1} - \mathbf{w}^k\|}{\max(1,\|\mathbf{w}^k\| )} \leq 10^{-3}$. We report the computational time (ms) and the resulting negative log-likelihood (NLL), and compare the results in Table~\ref{tab:syntheticmle} with those obtained from the generalized beta and Johnson translation systems fitted via \texttt{scipy.stats}~\citep{Scipy2020}. Note that we restrict the comparison with these two distribution families to maximum likelihood estimation, as it is a standard method for fitting rich distributions to sample data in the simulation literature \citep{Law_2015} and the one implemented in the software package used for benchmarking.
\begin{table}[htbp]
    \centering
    {\scriptsize\setlength{\tabcolsep}{3pt}
    \begin{tabular}{l rr rr rr rr rr rr}
        \toprule
        & \multicolumn{4}{c}{B\'ezier} & \multicolumn{4}{c}{Beta} & \multicolumn{4}{c}{Johnson} \\
        \cmidrule(lr){2-5} \cmidrule(lr){6-9} \cmidrule(lr){10-13}
        & \multicolumn{2}{c}{time (ms)} & \multicolumn{2}{c}{NLL} & \multicolumn{2}{c}{time (ms)} & \multicolumn{2}{c}{NLL} & \multicolumn{2}{c}{time (ms)} & \multicolumn{2}{c}{NLL} \\
        \cmidrule(lr){2-3} \cmidrule(lr){4-5} \cmidrule(lr){6-7} \cmidrule(lr){8-9} \cmidrule(lr){10-11} \cmidrule(lr){12-13}
        dist & mean & std & mean & std & mean & std & mean & std & mean & std & mean & std \\
        \midrule
        Beta               & 2.9 & 1.4 & 1836.9 & 722.7 & 15.2 & 9.9 & 1830.6 & 725.1 & 44.1 & 11.7 & 1889.5 & 716.9 \\
        Bimodal Gaussian   & 34.0 & 17.9 & \textbf{2570.8} & 269.3 & 27.6 & 11.2 & 2931.2 & 398.7 & 35.6 & 15.9 & 2875.4 & 334.8 \\
        Exponential        & 4.5 & 2.8 & 2420.9 & 686.2 & 33.6 & 10.3 & 2436.0 & 678.9 & 43.1 & 10.5 & 2448.7 & 686.3 \\
        Gamma              & 6.4 & 5.6 & 2832.7 & 749.0 & 25.1 & 11.6 & 2823.6 & 748.0 & 43.2 & 10.9 & 2829.7 & 740.5 \\
        Log Logistic       & 32.6 & 20.9 & 1481.3 & 718.6 & 33.8 & 10.4 & 1535.9 & 737.1 & 19.0 & 10.1 & 1468.2 & 717.7 \\
        Lognormal          & 15.8 & 12.7 & 1726.4 & 908.5 & 31.6 & 11.1 & 1772.8 & 946.5 & 37.9 & 12.7 & 1714.7 & 907.6 \\
        Normal             & 11.8 & 8.7 & 2103.4 & 549.1 & 19.4 & 11.1 & 2094.0 & 549.0 & 33.7 & 14.7 & 2094.1 & 549.0 \\
        Triangular         & 2.3 & 0.7 & 2116.5 & 590.9 & 12.2 & 5.2 & 2119.4 & 590.4 & 38.7 & 8.8 & 2148.8 & 591.9 \\
        Trimodal Gaussian  & 29.5 & 18.2 & \textbf{2431.0} & 391.6 & 27.2 & 11.0 & 2825.2 & 352.6 & 24.6 & 14.8 & 2683.9 & 443.3 \\
        Uniform            & 0.5 & 0.1 & 2231.2 & 694.9 & 25.9 & 10.8 & 2237.4 & 698.7 & 40.0 & 11.1 & 2410.4 & 694.7 \\
        Weibull            & 5.5 & 4.0 & 1271.4 & 685.2 & 15.1 & 9.0 & 1268.9 & 693.2 & 38.8 & 9.2 & 1275.2 & 691.5 \\
        \midrule
        Average            & \textbf{13.2} & 16.6 & \textbf{2092.9} & \textbf{796.0} & 24.2 & 12.6 & 2170.2 & 845.9 & 36.2 & 14.2 & 2167.1 & 836.2 \\
        \bottomrule
    \end{tabular}}
    \caption{MLE comparison (synthetic data).}
    \label{tab:syntheticmle}
\end{table}

The results underscore the flexibility of the B\'ezier distribution. Its average NLL is comparable to the competing families on unimodal distributions but substantially better on multimodal data (differences of several hundred in NLL). Its NLL standard deviation ($796.0$) is likewise the lowest of the three, signaling more uniform fit quality. However, when the underlying distribution is standard the extra flexibility is unnecessary, and the classical families perform comparably or, as for the Beta, marginally better, as expected when the true model lies in the parametric class. In terms of runtime the B\'ezier fit is the fastest, averaging about half the generalized beta and a third the Johnson translation system, which is striking given its nine free parameters (the $z$-coordinates of the $n-1$ interior control points) against four for each competitor. The primal gradient scheme thus delivers this richer parameterization at lower cost, eliminating the speed-for-flexibility trade-off that previously discouraged adoption.
\begin{table}[htbp]
    \centering
    {\scriptsize\setlength{\tabcolsep}{3pt}
    \begin{tabular}{l rr rr c rr rr c rr rr c}
        \toprule
        & \multicolumn{5}{c}{PGD} & \multicolumn{5}{c}{IPOPT} & \multicolumn{5}{c}{NM} \\
        \cmidrule(lr){2-6} \cmidrule(lr){7-11} \cmidrule(lr){12-16}
        & \multicolumn{2}{c}{time (ms)} & \multicolumn{2}{c}{MSE} & & \multicolumn{2}{c}{time (ms)} & \multicolumn{2}{c}{MSE} & & \multicolumn{2}{c}{time (ms)} & \multicolumn{2}{c}{MSE} & \\
        \cmidrule(lr){2-3} \cmidrule(lr){4-5} \cmidrule(lr){7-8} \cmidrule(lr){9-10} \cmidrule(lr){12-13} \cmidrule(lr){14-15}
        dist & mean & std & mean & std & fail & mean & std & mean & std & fail & mean & std & mean & std & fail \\
        \midrule
        Beta               & 1.1 & 0.2 & 2.7e-5 & 9.0e-6 & 0.0\% & 2937.7 & 3044.2 & 1.2e-5 & 3.6e-6 & 0.5\% & 13270.4 & 989.8 & 2.4e-4 & 2.9e-4 & 0.0\% \\
        Bimodal Gaussian   & 1.9 & 0.6 & 5.6e-4 & 5.9e-4 & 0.0\% & 2181.3 & 466.2 & 9.2e-5 & 1.4e-4 & 0.0\% & 13503.7 & 885.6 & 2.0e-3 & 1.9e-3 & 0.0\% \\
        Exponential        & 1.3 & 0.3 & 3.5e-5 & 1.2e-5 & 0.0\% & 2959.0 & 1002.7 & 1.2e-5 & 3.6e-6 & 0.5\% & 14973.3 & 738.8 & 1.8e-4 & 2.8e-4 & 0.0\% \\
        Gamma              & 1.4 & 0.3 & 3.5e-5 & 1.4e-5 & 0.0\% & 2573.3 & 671.4 & 1.2e-5 & 4.1e-6 & 0.0\% & 14992.7 & 790.3 & 1.8e-4 & 9.8e-5 & 0.0\% \\
        Log Logistic       & 1.9 & 0.6 & 8.9e-5 & 1.2e-4 & 0.0\% & 2295.8 & 419.7 & 1.3e-5 & 4.7e-6 & 0.0\% & 15734.1 & 1236.3 & 1.7e-3 & 1.7e-3 & 0.0\% \\
        Lognormal          & 1.5 & 0.3 & 4.3e-5 & 2.6e-5 & 0.0\% & 2422.5 & 414.4 & 1.3e-5 & 4.1e-6 & 0.0\% & 15663.3 & 1345.5 & 5.6e-4 & 6.1e-4 & 0.0\% \\
        Normal             & 1.5 & 0.4 & 3.8e-5 & 1.1e-5 & 0.0\% & 2314.7 & 487.3 & 1.2e-5 & 3.7e-6 & 0.0\% & 13528.0 & 901.0 & 8.6e-4 & 5.7e-4 & 0.0\% \\
        Triangular         & 1.1 & 0.2 & 2.5e-5 & 7.5e-6 & 0.0\% & 2325.5 & 498.1 & 1.1e-5 & 3.2e-6 & 0.5\% & 11948.0 & 722.3 & 9.4e-5 & 8.4e-5 & 0.0\% \\
        Trimodal Gaussian  & 1.9 & 0.6 & 9.1e-4 & 9.6e-4 & 0.0\% & 2294.9 & 597.7 & 3.2e-4 & 4.0e-4 & 0.0\% & 12506.9 & 1141.1 & 3.1e-3 & 2.6e-3 & 0.0\% \\
        Uniform            & 0.9 & 0.1 & 2.2e-5 & 7.4e-6 & 0.0\% & 2768.3 & 1213.1 & 1.1e-5 & 3.4e-6 & 0.0\% & 10527.4 & 752.8 & 2.3e-5 & 8.8e-6 & 0.0\% \\
        Weibull            & 1.2 & 0.2 & 3.4e-5 & 1.3e-5 & 0.0\% & 2253.7 & 508.1 & 1.2e-5 & 3.6e-6 & 0.0\% & 12577.1 & 1111.9 & 3.6e-4 & 3.8e-4 & 0.0\% \\
        \midrule
        Average            & \textbf{1.4} & \textbf{0.5} & 1.7e-4 & 4.4e-4 & \textbf{0.0\%} & 2483.9 & 1150.4 & \textbf{4.7e-5} & \textbf{1.6e-4} & 0.1\% & 13565.9 & 1854.6 & 8.4e-4 & 1.5e-3 & 0.0\% \\
        \bottomrule
    \end{tabular}}
    \caption{MSE comparison (synthetic data).}
    \label{tab:syntheticmse}
\end{table}

Under the same setup, we compare PGD (with stopping criterion $\frac{\|\mathbf{z}^{k+1} - \mathbf{z}^k\|}{\max(1,\|\mathbf{z}^k\|)} \leq 10^{-3}$), IPOPT, and the Nelder--Mead (NM) algorithm---the original method for B\'ezier fitting \citep{Wagner1993}, as implemented in \citet{Scipy2020}---on minimum error estimation. PGD and NM are capped at 200 iterations and IPOPT at 60 seconds of CPU time. Table~\ref{tab:syntheticmse} reports MSE and running times. IPOPT attains the lowest average MSE ($10^{-5}$) ahead of PGD and NM ($10^{-4}$), as expected since \eqref{eq:restricted} optimizes over both $\mathbf{x}$ and $\mathbf{z}$, whereas \eqref{eq:convexQP} fixes $\mathbf{x}$. However, this roughly one-order accuracy gap may not be practically significant, while the computational trade-off is dramatic: PGD is by far the fastest (1.4\,ms per fit, lowest variability), against over $1{,}700\times$ that effort for IPOPT and $9{,}000\times$ for NM. Moreover, IPOPT is not fully reliable, failing (with errors or crashes) on about $0.1\%$ of instances. Since the MLE primal gradient scheme operates on the same restricted set with analogous convergence guarantees, these advantages carry over directly.

\subsection{Transportation data} \label{sec:transportationdata}
The Chicago road network dataset curated by \citet{Goerigk2025} comprises 3.89 million speed records from real traffic data. The network is represented as a directed graph with 538 nodes and 1308 arcs, where each arc is associated with a collection of travel time observations. After excluding arcs containing only a single observation, we obtained a reduced graph of 1,086 arcs. To reflect different levels of temporal information available in routing applications, we partition the data into three time regimes: morning rush hour (6:00–10:00), daytime (6:00–18:00), and full day (0:00–23:59). These scenarios correspond to settings in which a traveler knows they will traverse the network during peak hours, during the broader daytime period, or has no specific departure-time information and must therefore consider the entire daily distribution.

Table~\ref{tab:realdist} reports, for each time regime, the distribution of the
number of travel-time observations per arc (mean, standard deviation, minimum,
and maximum), together with three arc-level travel-time statistics: the mean travel time (in minutes), the standard deviation, and the average range. As expected, as the window widens from the morning rush hour to the full day, the number of observations per arc and the travel-time range increase.

\begin{table}[htbp]
    \centering
    {\scriptsize\setlength{\tabcolsep}{3pt}
    \begin{tabular}{ll rrr}
        \toprule
        & statistic & \multicolumn{1}{c}{Rush hour} & \multicolumn{1}{c}{Daytime} & \multicolumn{1}{c}{Full-day} \\
        \midrule
        \multirow{4}{*}{\# obs.} & mean  & 550.8 & 1652.3 & 3274.2 \\
                                 & std   & 68.4 & 205.3 & 406.8 \\
                                 & min   & 542.0 & 1626.0 & 3222.0 \\
                                 & max   & 1084.0 & 3252.0 & 6444.0 \\
        \midrule
        \multirow{3}{*}{travel time} & mean  & 1.44 & 1.48 & 1.44 \\
                                     & std   & 0.36 & 0.41 & 0.40 \\
                                     & range & 3.31 & 5.53 & 6.27 \\
        \bottomrule
    \end{tabular}}
    \caption{Distribution of the number of observations per arc and arc-level travel time statistics across different time regimes.}
    \label{tab:realdist}
\end{table}

Using this dataset, we fit a B\'ezier distribution to each arc via MLE with the proposed primal gradient scheme, increasing the degree $n$ of the B\'ezier distribution to accommodate the greater variety of distributional shapes arising from wider time windows. Table~\ref{tab:realmle} reports the resulting NLL values and computational times. On average, the Johnson translation system and the generalized beta attain lower mean NLL than the B\'ezier fit; however, this average masks lack of robustness. The standard deviation of the NLL is roughly 2.4 and 5.4 times larger for the beta and Johnson models, respectively, than for the B\'ezier distribution. This indicates that while the beta and Johnson distributions fit well on instances that align with their structural assumptions, they fit poorly on others, whereas the B\'ezier distribution delivers consistently comparable fits across a far wider range of shapes. The same robustness is visible per regime: the dispersion of the competing models grows sharply as the time window widens and the Johnson translation system reaches a NLL standard deviation above $14{,}000$ on the full-day data, while the B\'ezier fit remains comparatively stable. In terms of computational time, Johnson is fastest on average, but the B\'ezier fit trails by only about 5\,ms; moreover, on the largest instances (full-day) the B\'ezier distribution is actually the fastest of the three, despite using the highest number of parameters.

\begin{table}[htbp]
    \centering
    {\scriptsize\setlength{\tabcolsep}{3pt}
    \begin{tabular}{lc rr rr rr rr rr rr}
        \toprule
        & & \multicolumn{4}{c}{B\'ezier} & \multicolumn{4}{c}{Beta} & \multicolumn{4}{c}{Johnson} \\
        \cmidrule(lr){3-6} \cmidrule(lr){7-10} \cmidrule(lr){11-14}
        & & \multicolumn{2}{c}{time (ms)} & \multicolumn{2}{c}{NLL} & \multicolumn{2}{c}{time (ms)} & \multicolumn{2}{c}{NLL} & \multicolumn{2}{c}{time (ms)} & \multicolumn{2}{c}{NLL} \\
        \cmidrule(lr){3-4} \cmidrule(lr){5-6} \cmidrule(lr){7-8} \cmidrule(lr){9-10} \cmidrule(lr){11-12} \cmidrule(lr){13-14}
        time frame & $n$ & \multicolumn{1}{c}{mean} & \multicolumn{1}{c}{std} & \multicolumn{1}{c}{mean} & \multicolumn{1}{c}{std} & \multicolumn{1}{c}{mean} & \multicolumn{1}{c}{std} & \multicolumn{1}{c}{mean} & \multicolumn{1}{c}{std} & \multicolumn{1}{c}{mean} & \multicolumn{1}{c}{std} & \multicolumn{1}{c}{mean} & \multicolumn{1}{c}{std} \\
        \midrule
        Rush hour    & 10 & 48.1 & 37.7 & $-$2226.7 & 431.0 & 42.7 & 19.2 & $-$2362.0 & 1153.5 & 36.5 & 22.7 & $-$2745.8 & 2178.3 \\
        Daytime      & 15 & 54.3 & 34.1 & $-$6351.9 & 1241.8 & 50.5 & 20.9 & $-$6806.5 & 3140.5 & 45.2 & 26.1 & $-$8103.6 & 6212.3 \\
        Full-day     & 20 & \textbf{52.5} & 25.9 & $-$12734.4 & 2473.4 & 60.2 & 11.3 & $-$13553.3 & 5691.8 & 58.0 & 26.9 & $-$17129.1 & 14038.2 \\
        \midrule
        \multicolumn{2}{l}{average} & 51.6 & 32.6 & $-$7104.3 & \textbf{1382.1} & 51.1 & 17.1 & $-$7573.9 & 3328.6 & \textbf{46.5} & 25.2 & $-$9326.2 & 7476.2 \\
        \bottomrule
    \end{tabular}}
    \caption{MLE comparison (transportation data).}
    \label{tab:realmle}
\end{table}

Within the same setting, we benchmark the minimum error estimation algorithms in Table~\ref{tab:MDEtab}. PGD runs three to four orders of magnitude faster than IPOPT and NM with lower run-time variability, whereas IPOPT attains the lowest MSE. The methods also scale very differently: from $n=10$ to $n=20$, PGD's average runtime rises only from 1.2 to 4.8\,ms, whereas IPOPT's grows by more than an order of magnitude (1{,}056 to 24{,}044\,ms). IPOPT's accuracy comes at the cost of reliability---it failed on $20.1\%$ of instances, versus $0.2\%$ for NM and none for PGD---and since failures are excluded from the averages, its MSE reflects only the better-behaved instances it could solve and is not directly comparable to PGD's over all instances. PGD is therefore the more robust choice, which is especially valuable when the data distribution is unknown a priori.
\begin{table}[htbp]
    \centering
    {\scriptsize\setlength{\tabcolsep}{3pt}
    \begin{tabular}{lc rr rr c rr rr c rr rr c}
        \toprule
        & & \multicolumn{5}{c}{PGD} & \multicolumn{5}{c}{IPOPT} & \multicolumn{5}{c}{NM} \\
        \cmidrule(lr){3-7} \cmidrule(lr){8-12} \cmidrule(lr){13-17}
        & & \multicolumn{2}{c}{time (ms)} & \multicolumn{2}{c}{MSE} & & \multicolumn{2}{c}{time (ms)} & \multicolumn{2}{c}{MSE} & & \multicolumn{2}{c}{time (ms)} & \multicolumn{2}{c}{MSE} & \\
        \cmidrule(lr){3-4} \cmidrule(lr){5-6} \cmidrule(lr){8-9} \cmidrule(lr){10-11} \cmidrule(lr){13-14} \cmidrule(lr){15-16}
        time frame & $n$ & \multicolumn{1}{c}{mean} & \multicolumn{1}{c}{std} & \multicolumn{1}{c}{mean} & \multicolumn{1}{c}{std} & \multicolumn{1}{c}{fail} & \multicolumn{1}{c}{mean} & \multicolumn{1}{c}{std} & \multicolumn{1}{c}{mean} & \multicolumn{1}{c}{std} & \multicolumn{1}{c}{fail} & \multicolumn{1}{c}{mean} & \multicolumn{1}{c}{std} & \multicolumn{1}{c}{mean} & \multicolumn{1}{c}{std} & \multicolumn{1}{c}{fail} \\
        \midrule
        Rush hour    & 10 & 1.2 & 0.5 & 0.0131 & 0.0570 & 0.0\% & 1056.0 & 1376.7 & 0.0014 & 0.0257 & 12.3\% & 7677.3 & 1350.4 & 0.0621 & 0.1058 & 0.4\% \\
        Daytime      & 15 & 2.6 & 0.9 & 0.0159 & 0.0538 & 0.0\% & 7362.9 & 7707.7 & 0.0016 & 0.0326 & 17.3\% & 25471.9 & 5163.3 & 0.1659 & 0.1257 & 0.1\% \\
        Full-day     & 20 & 4.8 & 1.4 & 0.0147 & 0.0502 & 0.0\% & 24043.5 & 25524.9 & 0.0011 & 0.0165 & 30.6\% & 53841.9 & 12276.6 & 0.2043 & 0.1323 & 0.0\% \\
        \midrule
        \multicolumn{2}{l}{average} & \textbf{2.9} & \textbf{0.9} & 0.0146 & 0.0537 & \textbf{0.0\%} & 10820.8 & 11536.4 & \textbf{0.0013} & \textbf{0.0249} & 20.1\% & 28997.0 & 6263.4 & 0.1441 & 0.1213 & 0.2\% \\
        \bottomrule
    \end{tabular}}
    \caption{MSE comparison (transportation data).}
    \label{tab:MDEtab}
\end{table}

\subsection{Effect of the restriction on fit quality} \label{sec:fitquality}
To validate the approximation guarantee of Theorem~\ref{theorem} in a
controlled setting, we design an experiment based on the cdf of
Example~\ref{counterexample}, hereafter denoted $F(t;\mathbf{x},\mathbf{z})$, whose control points violate the monotonicity condition and therefore lie in
$\mathcal{S}\setminus\mathcal{C}$. For each value of $n$, we draw
$m = 20\,n^2$ samples from $F(t;\mathbf{x},\mathbf{z})$ and fit a $n$-th degree B\'ezier distribution
$\hat{F}_n(t;\hat{\mathbf{x}},\hat{\mathbf{z}})$ by solving formulation~\eqref{eq:convexQP} with PGD, capping the
number of iterations at $\min(500,\, 12n)$. We repeat this procedure over
$10$ independent replications and let $n$ range over a logarithmically
spaced grid from $3$ to $100$.
\begin{figure}[htbp]
    \centering
    \resizebox{0.45\textwidth}{!}{%
        \input{restriction_rate.pgf}%
    }
    \caption{Restriction quality versus degree of the B\'ezier distribution.}
    \label{fig:restrictioneval}
\end{figure}
Figure~\ref{fig:restrictioneval} reports, as a function of $n$, the discrepancy
$D_n := \sup_{t\in[0,1]} \bigl| F(t;\mathbf{x},\mathbf{z}) - \hat{F}_n(t;\hat{\mathbf{x}},\hat{\mathbf{z}}) \bigr|$
between $F$ and its fit $\hat{F}_n$. We approximate $D_n$ on a uniform grid of $1{,}000$ points in $[0,1]$ and plot its mean over the $10$ replications with a standard deviation band. Unlike Theorem~\ref{theorem}, whose bound is joint in the $x$- and $z$-coordinates, $D_n$ captures only the vertical gap at each $t$, which we adopt for interpretability. The discrepancy decays at the predicted rate $\mathcal{O}(1/n)$ (black, for reference), confirming that additional control points compensate for the loss from restricting to monotone control points.

\section{Application to data-driven reliable shortest paths} \label{sec:spp}
To illustrate the practical utility of the \texttt{bezierv} package, we apply its fitting routines and the Monte Carlo convolution algorithm of Section~\ref{sec:conv} to a stochastic shortest path problem on a transportation network with random travel times. Leveraging the B\'ezier distributions fitted in Section~\ref{sec:transportationdata}, we construct a directed graph representation of the Chicago network in which each arc is associated with an independent B\'ezier random variable modeling its travel time. Formally, let $\mathcal{G} = (\mathcal{N}, \mathcal{A})$ denote the directed graph, where each arc $(i,j) \in \mathcal{A}$ is characterized by a deterministic cost $c_{ij} \in \mathbb{R}_{+}$ and a B\'ezier random variable $\tilde{t}_{ij}$ representing travel time. Given an origin-destination pair $(v_s, v_e)$, a time budget $T$, and a target reliability $\alpha \in [0,1]$, the shortest $\alpha$-reliable path (S-$\alpha$RP) \citep{Corredor2021} minimizes the total path cost $c(\mathcal{P}) = \sum_{(i,j) \in \mathcal{P}} c_{ij}$ while satisfying the chance constraint $
    \mathbb{P}[\tilde{t}(\mathcal{P}) \leq T] \geq \alpha,
$
where $\tilde{t}(\mathcal{P}) = \sum_{(i,j) \in \mathcal{P}} \tilde{t}_{ij}$. To solve this problem, we adopt the pulse algorithm framework \citep{Lozano2013}, a depth-first search approach with tailored pruning strategies to discard partial paths (pulses) that has been successfully applied to various stochastic shortest path problems \citep{Leiva2026}. 

To solve the S-$\alpha$RP, we implemented the pulse algorithm for which the \emph{infeasibility pruning} strategy evaluates each partial path by computing the probability of on-time arrival under a best-case travel-time scenario: any path whose optimistic probability still violates the reliability threshold is discarded outright. Therefore, for this pruning strategy, we need to compute the convolution of the travel times along a partial path, which we approximate with Monte Carlo using the convolution utility of \texttt{bezierv}.

\begin{figure}[htbp]
    \centering
    \begin{subfigure}[b]{0.3\textwidth}
        \resizebox{\textwidth}{!}{%
        \input{sarp_path_grid.pgf}}
        \caption{Optimal path}
        \label{fig:optpath}
    \end{subfigure}
    \hspace{1cm}
    \begin{subfigure}[b]{0.35\textwidth}
        \resizebox{\textwidth}{!}{%
        \input{sarp_path_dist.pgf}}
        \caption{Optimal path's distribution}
        \label{fig:optpath_dist}
    \end{subfigure}
    \caption{S-$\alpha$RP instance.}
    \label{fig:experimental_results}
\end{figure}

We illustrate the procedure for finding the shortest $\alpha$-reliable path on a trip from the city's Northwest Side (node 136) to the South Side (node 306) on the morning rush hour. We set the target reliability level at $\alpha = 0.9$. Following an established protocol \citep{Santos2007}, the time budget $T$ is calibrated as a weighted sum of the $\alpha$-quantiles from two benchmark paths (minimum mean-time and minimum cost), yielding $T = 0.632$ hours for this OD pair.  Figure~\ref{fig:experimental_results} shows the optimal path found using the pulse algorithm and the convolution utilities, which is 12.56 miles long, comprises 18 arcs, and achieves a reliability of 99.3\%, which satisfies the required threshold of 90\%.

\section{Concluding remarks} \label{sec:conclusions}
B\'ezier distributions are a flexible family for modeling bounded univariate data, but their adoption has been hindered by the lack of efficient fitting tools. We close this gap with a computational framework for both minimum error and maximum likelihood estimation via first-order methods. At the core of these methods is a monotonic restriction of the control points, which we prove to be an asymptotically lossless approximation of the B\'ezier distribution family. For minimum error estimation, enforcing monotonicity and fixing the $x$-coordinates of the control points yields a convex problem whose projection reduces to an isotonic regression, yielding an $\mathcal{O}(n)$ projection operator and enabling an efficient projected gradient descent (PGD) algorithm. For maximum likelihood estimation the same restriction applies, but the objective lacks a Lipschitz gradient, so standard PGD is unsuitable; we instead develop a primal gradient scheme based on relative smoothness that preserves the convergence guarantees. We also show that closed-form convolutions of B\'ezier variables are not necessarily tractable---they require roots of high-degree polynomials---and propose a Monte Carlo alternative, which proves essential in our data-driven reliable shortest path application.

Computationally, the proposed methods run three to four orders of magnitude faster than the original derivative-free approach of \cite{Wagner1993}. Against the nonlinear solver IPOPT, the first-order methods are substantially faster and more robust: although IPOPT attains the lowest objective value on the instances it solves, its runtime is orders of magnitude larger and it fails on a nontrivial fraction of instances, whereas PGD solves instances in roughly 3\,ms on real-world data and under 2\,ms on synthetic data without failures. For maximum likelihood estimation, the B\'ezier fit is comparable to the generalized beta and Johnson translation systems on unimodal data and substantially better on multimodal data at comparable fitting time. Moreover, our controlled evaluation confirms Theorem~\ref{theorem} empirically: the effect of the monotonicity restriction on fit quality is mitigated as the number of control points grows.

We package these advances in \texttt{bezierv}, an open-source Python library compatible with the scientific Python ecosystem (e.g., NumPy, SciPy) that bundles fitting, plotting, random-variate generation, and Monte Carlo convolution, lowering the barrier to entry for researchers and practitioners. We demonstrate its versatility both for standard input modeling and by embedding it within a larger algorithmic framework for stochastic shortest path problems, enabling the integration of B\'ezier distributions into broader decision-support systems.

Future research could extend this framework in several directions. Increasing the degree $n$ expands the family's expressive power but enlarges the parameter space, complicates the optimization landscape, and increases the risk of overfitting; an attractive alternative is to retain a moderate $n$ and jointly optimize the $x$- and $z$-coordinates rather than fixing the $x$-coordinates a priori to empirical quantiles. On the probabilistic front, copulas could capture the dependence structure between multiple B\'ezier random variables; on the computational front, the Monte Carlo convolution could be scaled for large-scale simulation experiments.

\bibliographystyle{informs2014}
\bibliography{refs}
\clearpage

\begin{APPENDIX}{}\label{apx:proofs}
\section{Examples of B\'ezier distributions} \label{appendix:examples_bezier}

    To provide intuition for the graphical nature of Bézier cdfs, we present three illustrative examples in Figure \ref{fig:bezierthree}, each displaying the cdf in black with its corresponding control points in red ($\mathbf{p}_i,\; i=0,\ldots,3$). The control points function as ``magnets'', pulling the curve towards their location. The pseudo-local control property is demonstrated by comparing the leftmost and center figures, where modifying the position of a single control   point (i.e., $\mathbf{p}_1$) alters the shape of the entire curve.
    \begin{figure}[htbp]
          \centering
          \begin{subfigure}{0.3\textwidth}
            \caption{}
            \centering
            \resizebox{\linewidth}{!}{\begin{tikzpicture}
 \coordinate (P0) at (0,0);
 \coordinate (P1) at (0.10,0.8);
 \coordinate (P2) at (0.75,0.9);
 \coordinate (P3) at (2,1);

 \draw[gray!50, thin, step=0.1] (0,0) grid (2,1.0);
 \draw[->] (0,0) -- (2.1,0) node[scale=0.5,pos=0.5,below] {\tiny{$x(t)$}};
 \draw[->] (0,0) -- (0,1.1) node[scale=0.5,pos=0.5, sloped, above] {\tiny{$F_X(x(t))$}};

 \draw (P0) .. controls (P1) and (P2) .. (P3);

 \node[scale=0.5, below left] at (0,0.1) {\tiny{$0$}};
 \node[scale=0.5, left] at (0,1) {\tiny{$1$}};
 
 
 \fill[red] (P0) circle (1.0pt);
 \fill[red] (P1) circle (1.0pt);
 \fill[red] (P2) circle (1.0pt);
 \fill[red] (P3) circle (1.0pt);
\end{tikzpicture}}
          \end{subfigure}%
          \hfill
          \begin{subfigure}{0.3\textwidth}
            \caption{}
            \centering
            \resizebox{\linewidth}{!}{\begin{tikzpicture}
  \coordinate (P0) at (0,0);
  \coordinate (P1) at (0.6,0.3);
  \coordinate (P2) at (0.75,0.9);
  \coordinate (P3) at (2,1);

  \draw[gray!50, thin, step=0.1] (0,0) grid (2,1.0);
  \draw[->] (0,0) -- (2.1,0) node[scale=0.5,pos=0.5,below] {\tiny{$x(t)$}};
  \draw[->] (0,0) -- (0,1.1) node[scale=0.5,pos=0.5, above, sloped] {\tiny{$F_X(x(t))$}};
  \draw (P0) .. controls (P1) and (P2) .. (P3);

 \node[scale=0.5, below left] at (0,0.1) {\tiny{$0$}};
  \node[scale=0.5, left] at (0,1) {\tiny{$1$}};
  
  
  \fill[red] (P0) circle (1.0pt);
  \fill[red] (P1) circle (1.0pt);
  \fill[red] (P2) circle (1.0pt);
  \fill[red] (P3) circle (1.0pt);
\end{tikzpicture}}
          \end{subfigure}%
          \hfill
          \begin{subfigure}{0.3\textwidth}
            \caption{}
            \centering
             \resizebox{\linewidth}{!}{\begin{tikzpicture}
  \coordinate (P0) at (0,0);
  \coordinate (P1) at (1,0.1);
  \coordinate (P2) at (1.5,0.3);
  \coordinate (P3) at (2,1);

  \draw[gray!50, thin, step=0.1] (0,0) grid (2,1.0);
  \draw[->] (0,0) -- (2.1,0) node[scale=0.5,pos=0.5,below] {\tiny{$x(t)$}};
  \draw[->] (0,0) -- (0,1.1) node[scale=0.5,pos=0.5, above, sloped] {\tiny{$F_X(x(t))$}};
  \draw (P0) .. controls (P1) and (P2) .. (P3);

 \node[scale=0.5, below left] at (0,0.1) {\tiny{$0$}};
  \node[scale=0.5, left] at (0,1) {\tiny{$1$}};
  
  
  \fill[red] (P0) circle (1.0pt);
  \fill[red] (P1) circle (1.0pt);
  \fill[red] (P2) circle (1.0pt);
  \fill[red] (P3) circle (1.0pt);
\end{tikzpicture}}
          \end{subfigure}

          \caption{B\'ezier cdf examples.}
          \label{fig:bezierthree}
    \end{figure}

Figure \ref{fig:derivativethree} plots the pdf corresponding to each cdf from Figure \ref{fig:bezierthree}, visually confirming that each derivative is non-negative.

    \begin{figure}[htbp]
        \centering
        \begin{subfigure}{0.24\textwidth}
            \caption{}
            \centering
            \resizebox{\linewidth}{!}{\begin{tikzpicture}
  \begin{axis}[
    xlabel={$x(t)$},
    ylabel={$f_X(x(t))$},
    grid=major,
    xmin=0,
    xmax=2,
    ymin=0,
    xticklabel=\empty,
    yticklabel=\empty,
    xtick=\empty,
    ytick=\empty,
    legend pos=north west,
    declare function={
      xoft(\t) = (1-\t)^3*0 + 3*\t*(1-\t)^2*0.1 + 3*\t^2*(1-\t)*0.75 + \t^3*2;
      numerator(\t) = 3*(1-\t)^2*(0.8-0) + 6*\t*(1-\t)*(0.9-0.8) + 3*\t^2*(1-0.9);
      denominator(\t) = 3*(1-\t)^2*(0.1-0) + 6*\t*(1-\t)*(0.75-0.1) + 3*\t^2*(2-0.75);
    },
  ]
    
  \addplot [
    black,
    thick,
    domain=0:1,
    samples=101,
    ]
    ({xoft(x)}, {numerator(x)/denominator(x)});
    
  \end{axis}
\end{tikzpicture}}
        \end{subfigure}%
        \quad\quad 
        \begin{subfigure}{0.24\textwidth}
            \caption{}
            \centering
            \resizebox{\linewidth}{!}{\begin{tikzpicture}
  \begin{axis}[
    xlabel={$x(t)$},
    ylabel={$f_X(x(t))$},
    grid=major,
    xmin=0,
    xmax=2,
    ymin=0,
        xticklabel=\empty,
    yticklabel=\empty,
    xtick=\empty,
    ytick=\empty,
    legend pos=north west,
    declare function={
      xoft(\t) = (1-\t)^3*0 + 3*\t*(1-\t)^2*0.6 + 3*\t^2*(1-\t)*0.75 + \t^3*2;
      numerator(\t) = 3*(1-\t)^2*(0.3-0) + 6*\t*(1-\t)*(0.9-0.3) + 3*\t^2*(1-0.9);
      denominator(\t) = 3*(1-\t)^2*(0.6-0) + 6*\t*(1-\t)*(0.75-0.6) + 3*\t^2*(2-0.75);
    },
  ]
    
  \addplot [
    black,
    thick,
    domain=0:1,
    samples=101,
  ]
  ({xoft(x)}, {numerator(x)/denominator(x)});

  \end{axis}
\end{tikzpicture}}
        \end{subfigure}%
        \quad\quad 
        \begin{subfigure}{0.24\textwidth}
            \caption{}
            \centering
            \resizebox{\linewidth}{!}{\begin{tikzpicture}
  \begin{axis}[
    xlabel={$x(t)$},
    ylabel={$f_X(x(t))$},
    xmin=0,
    xmax=2,
    ymin=0,
    xticklabel=\empty,
    yticklabel=\empty,
    grid=major,
    xtick=\empty,
    ytick=\empty,
    legend pos=north west,
    declare function={
      xoft(\t) = (1-\t)^3*0 + 3*\t*(1-\t)^2*1 + 3*\t^2*(1-\t)*1.5 + \t^3*2;
      numerator(\t) = 3*(1-\t)^2*(0.1-0) + 6*\t*(1-\t)*(0.3-0.1) + 3*\t^2*(1-0.3);
      denominator(\t) = 3*(1-\t)^2*(1-0) + 6*\t*(1-\t)*(1.5-1) + 3*\t^2*(2-1.5);
    },
  ]
    
  \addplot [
    black,
    thick,
    domain=0:1,
    samples=101,
  ]
  ({xoft(x)}, {numerator(x)/denominator(x)});
    
  \end{axis}
\end{tikzpicture}}
        \end{subfigure}

        \caption{B\'ezier pdf examples.}
        \label{fig:derivativethree}
    \end{figure}
\section{Proofs} \label{appendix:proofs}
\begin{proof}{Proof of Lemma~\ref{lem:CsubsetS}.}
The boundary conditions hold by definition of $\mathcal{C}_n$. Non-negativity of $P_z'$ on $[0,1]$ follows from $P_z'(t; n, \mathbf{z}) = n \sum_{i=0}^{n-1} B_{n-1,i}(t)\, \Delta z_i$. For $P_x'$, the boundary conditions give $\Delta x_j > 0$ for some $j\in \{0,...,n-1\}$; since $B_{n-1,i}(t) > 0$ on $(0,1)$, it follows that \sloppy$P_x'(t; n, \mathbf{x}) = n \sum_{i=0}^{n-1} B_{n-1,i}(t)\, \Delta x_i \ge n\, B_{n-1,j}(t)\, \Delta x_j > 0$ on $(0,1)$.
\Halmos
\end{proof}
\vspace{1cm}

\begin{proof}{Proof of Theorem~\ref{theorem}.}
    By the endpoint interpolation property, $({\mathbf{x}}, {\mathbf{z}})$ inherits the boundary conditions from $\mathcal{S}_{\tilde{n}}$. Since $P_x(\,\cdot\,;\, \tilde{n},\, \tilde{\mathbf{x}})$ is strictly increasing and $P_z(\,\cdot\,;\, \tilde{n},\, \tilde{\mathbf{z}})$ is non-decreasing on $[0,1]$, we have that for $i = 0, \ldots, n-1$, $\tfrac{i}{n} \leq \tfrac{i+1}{n}$ yields
    \begin{align*}
    {x}_i = P_x\!\bigl(\tfrac{i}{n};\, \tilde{n},\, \tilde{\mathbf{x}}\bigr) \;\leq\; P_x\!\bigl(\tfrac{i+1}{n};\, \tilde{n},\, \tilde{\mathbf{x}}\bigr) = {x}_{i+1},
    \qquad
    {z}_i = P_z\!\bigl(\tfrac{i}{n};\, \tilde{n},\, \tilde{\mathbf{z}}\bigr)\;\leq\; P_z\!\bigl(\tfrac{i+1}{n};\, \tilde{n},\, \tilde{\mathbf{z}}\bigr)= {z}_{i+1},
    \end{align*}
    which combined with the boundary values gives $({\mathbf{x}}, {\mathbf{z}}) \in \mathcal{C}_n$. To prove \eqref{eq:rate}, observe that by construction ${\mathbf{p}}_i = ({x}_i,\, {z}_i)^\top = \tilde{\mathbf{P}}(\tfrac{i}{n})$, so $
  {\mathbf{P}}(t) \;=\; \sum_{i=0}^{n} B_{n,i}(t)\, \tilde{\mathbf{P}}\!\left(\tfrac{i}{n}\right)$.
  Since each coordinate of $\tilde{\mathbf{P}}$ is a polynomial in $t$, $\tilde{\mathbf{P}} \in C^\infty([0,1];\, \mathbb{R}^2)$, and the extreme value theorem provides that
  \begin{align*}
  \kappa \;:=\; \max\bigl(\|P_x''(\cdot;\, \tilde{n},\, \tilde{\mathbf{x}})\|_\infty,\, \|P_z''(\cdot;\, \tilde{n},\, \tilde{\mathbf{z}})\|_\infty\bigr) \;<\; \infty.
  \end{align*}
  Fix $t \in [0,1]$ and apply Taylor's theorem with integral remainder to $P_x(\,\cdot\,;\, \tilde{n},\, \tilde{\mathbf{x}})$ at $t$:
  \begin{align}
  P_x\!\bigl(\tfrac{i}{n};\, \tilde{n},\, \tilde{\mathbf{x}}\bigr)
  \;=\;
  P_x(t;\, \tilde{n},\, \tilde{\mathbf{x}}) + P_x'(t;\, \tilde{n},\, \tilde{\mathbf{x}})\bigl(\tfrac{i}{n} - t\bigr) + R_i(t), \label{eq:taylor_x}
  \end{align}
  where the remainder is given by $R_i(t) \;:=\; \int_t^{i/n}\!\! P_x''(s;\, \tilde{n},\, \tilde{\mathbf{x}})\bigl(\tfrac{i}{n} - s\bigr)\, ds$ and  satisfies
  \begin{align}
      |R_i(t)| \leq \|P_x''(\cdot;\, \tilde{n},\, \tilde{\mathbf{x}})\|_\infty \tfrac{1}{2}(\tfrac{i}{n} - t)^2 \leq \tfrac{\kappa}{2}(\tfrac{i}{n} - t)^2. \label{eq:bound_second_der}
  \end{align}
  Multiplying \eqref{eq:taylor_x} by $B_{n,i}(t)$ and summing over $i=0,...,n$, and using the Bernstein moment identities
  \begin{align*}
  \sum_{i=0}^{n} B_{n,i}(t) \;=\; 1,
  \qquad
  \sum_{i=0}^{n} \!\bigl(\tfrac{i}{n} - t\bigr) B_{n,i}(t) \;=\; 0,
  \qquad
  \sum_{i=0}^{n} \!\bigl(\tfrac{i}{n} - t\bigr)^{\!2}\! B_{n,i}(t) \;=\; \frac{t(1-t)}{n},
  \end{align*}
  it follows that $P_x(t;\, n,\, {\mathbf{x}}) - P_x(t;\, \tilde{n},\, \tilde{\mathbf{x}} ) = \sum_{i=0}^{n} B_{n,i}(t)\, R_i(t)$. Moreover, using
   the bound \eqref{eq:bound_second_der} and the third Bernstein moment identity we obtain
  \begin{align*}
  \bigl|P_x(t;\, n,\, {\mathbf{x}}) - P_x(t;\, \tilde{n},\, \tilde{\mathbf{x}} ) \bigr|
  \;\leq\;
  \sum_{i=0}^{n} B_{n,i}(t)\, |R_i(t)|
  \;\leq\;
  \frac{\kappa}{2} \sum_{i=0}^{n} \!\bigl(\tfrac{i}{n} - t\bigr)^{\!2}\! B_{n,i}(t)
  \;=\;
  \frac{\kappa}{2}\frac{t(1-t)}{n}.
  \end{align*}
  An analogous argument applied to $P_z(\,\cdot\,;\, \tilde{n},\, \tilde{\mathbf{z}})$ yields $\bigl|
  P_z(t;\, n,\, {\mathbf{z}})- P_z(t;\, \tilde{n},\, \tilde{\mathbf{z}})\bigr| \leq \tfrac{\kappa}{2}\tfrac{t(1-t)}{n}$, so we obtain
  \begin{align}
      \bigl\|\mathbf{P}(t) - \tilde{\mathbf{P}}(t)\bigr\|_\infty \;\leq\; \frac{\kappa}{2}\frac{t(1-t)}{n}.\label{eq:final_bound}
  \end{align}
  The results then follows by taking the supremum over $t \in [0,1]$ in \eqref{eq:final_bound}.
\Halmos
\end{proof}
\vspace{1cm}

\begin{proof}{Proof of Corollary~\ref{corollary}.}
    Fix $j\in\{1,...,m\}$. By the triangle inequality and Theorem~\ref{theorem},
    \begin{align*}
        \Big| P_z(t_{(j)};n,\mathbf{z})-P_z(\tilde{t}_{(j)};\tilde{n},\tilde{\mathbf{z}}) \Big|
        &\leq \Big| P_z(t_{(j)};n,\mathbf{z})-P_z(t_{(j)};\tilde{n},\tilde{\mathbf{z}}) \Big|
        + \Big| P_z(t_{(j)};\tilde n,\tilde{\mathbf{z}})-P_z(\tilde{t}_{(j)};\tilde{n},\tilde{\mathbf{z}}) \Big| \\
        &\leq \frac{\kappa}{8n} + \Big| P_z(t_{(j)};\tilde n,\tilde{\mathbf{z}})-P_z(\tilde{t}_{(j)};\tilde{n},\tilde{\mathbf{z}}) \Big|.
    \end{align*}
    For the remaining term, the mean value theorem applied to $P_z(\cdot;\tilde n,\tilde{\mathbf{z}})$ and then to $P_x(\cdot;\tilde n,\tilde{\mathbf{x}})$ (using $P_x'\geq\tilde{\delta}$ and $P_x(\tilde{t}_{(j)};\tilde n,\tilde{\mathbf{x}})=X_{(j)}=P_x(t_{(j)};n,\mathbf{x})$), together with Theorem~\ref{theorem}, yields
    \begin{align*}
        \Big| P_z(t_{(j)};\tilde n,\tilde{\mathbf{z}})-P_z(\tilde{t}_{(j)};\tilde{n},\tilde{\mathbf{z}}) \Big|
        &\leq \Vert P_z'(\cdot;\tilde n,\tilde{\mathbf{z}})\Vert_\infty\,|t_{(j)}-\tilde{t}_{(j)}|\\
        &\leq \Vert P_z'(\cdot;\tilde n,\tilde{\mathbf{z}})\Vert_\infty \,\frac{\big| P_x(t_{(j)};\tilde n,\tilde{\mathbf{x}})-P_x(t_{(j)};n,\mathbf{x}) \big|}{\tilde{\delta}}\leq \frac{\rho\kappa}{8n}.
    \end{align*}
    Hence $\big| P_z(t_{(j)};n,\mathbf{z})-P_z(\tilde{t}_{(j)};\tilde{n},\tilde{\mathbf{z}}) \big|\leq \tfrac{(1+\rho)\kappa}{8n}$ for every $j=1,...,m$. Since $F_m$ and $P_z$ take values in $[0,1]$, factoring each squared difference gives
    \begin{align*}
        &\Bigg| \frac{1}{m}\sum_{j=1}^m \bigl( F_m(X_{(j)}) - P_z(t_{(j)};\,n,\,\mathbf{z}) \bigr)^2 - \frac{1}{m}\sum_{j=1}^m \bigl( F_m(X_{(j)}) - P_z(\tilde{t}_{(j)};\,\tilde{n},\,\tilde{\mathbf{z}}) \bigr)^2 \Bigg| \\
        &\quad\leq \frac{1}{m}\sum_{j=1}^m \Big| P_z(t_{(j)};n,\mathbf{z})-P_z(\tilde{t}_{(j)};\tilde{n},\tilde{\mathbf{z}}) \Big|\;\Big| 2F_m(X_{(j)})-P_z(t_{(j)};n,\mathbf{z})-P_z(\tilde{t}_{(j)};\tilde{n},\tilde{\mathbf{z}}) \Big|
        \leq \frac{(1+\rho)\kappa}{4n},
    \end{align*}
    where the last inequality uses $\big|2F_m(X_{(j)})-P_z(t_{(j)};n,\mathbf{z})-P_z(\tilde{t}_{(j)};\tilde{n},\tilde{\mathbf{z}})\big|\leq 2$.
\Halmos
\end{proof}
\vspace{1cm}

\begin{proof}{Proof of Proposition~\ref{prop:relsmooth}.}
    Let $\mathbf{w} \in \mathrm{int}(\Delta^{n-1})$ and consider $\nabla^2 l(\mathbf{w}) = \sum_{j=1}^m \frac{\mathbf{a}_j\mathbf{a}^\top_j}{(\mathbf{a}^\top_j \mathbf{w})^2}$ and $\nabla^2 h(\mathbf{w}) = \mathrm{diag}(\frac{1}{w_0^2},...,\frac{1}{w_{n-1}^2})$, which are well defined since $\mathbf{w}>\mathbf{0}$. For any $\mathbf{v} \in \mathbb{R}^n$ it holds that \begin{align*}
         \sum_{j=1}^m \left(\frac{\mathbf{a}_j^\top \mathbf{v}}{\mathbf{a}^\top_j \mathbf{w}}\right)^2 = \sum_{j=1}^m \left(\sum_{i=0}^{n-1}\frac{a_{ji} v_iw_i}{\mathbf{a}^\top_j \mathbf{w}w_i}\right)^2\leq \sum_{j=1}^m \sum_{i=0}^{n-1}\frac{a_{ji}w_i}{\mathbf{a}^\top_j \mathbf{w}}\left(\frac{ v_i}{w_i}\right)^2
        = \sum_{i=0}^{n-1} \frac{ v_i^2}{w_i^2} \left(\sum_{j=1}^m \frac{a_{ji}w_i}{\mathbf{a}^\top_j \mathbf{w}}\right) \leq m\sum_{i=0}^{n-1} \frac{ v_i^2}{w_i^2},
    \end{align*}
    where the first inequality follows from Jensen's inequality and the second inequality from $a_{ji}w_i \leq \mathbf{a}^\top_j \mathbf{w}$ for $j=1,...,m$. Thus, we conclude that
    $
        \mathbf{v}^\top\nabla^2 l(\mathbf{w})\mathbf{v} \leq m\sum_{i=0}^{n-1} \frac{ v_i^2}{w_i^2} = m\left[ \mathbf{v}^\top\nabla^2h(\mathbf{w})\mathbf{v}\right]$.
\Halmos
\end{proof}

\end{APPENDIX}

\end{document}